\newtheorem{Theorem}{Theorem}[section]
\theoremstyle{definition}
\newtheorem{Definition}[Theorem]{Definition}
\newtheorem{Notation}[Theorem]{Notation}
\newtheorem{Lemma}[Theorem]{Lemma}
\newtheorem{Corollary}[Theorem]{Corollary}
\newtheorem{Remark}[Theorem]{Remark}
\newtheorem{Proposition}[Theorem]{Proposition}
\newtheorem{Example}[Theorem]{Example}
\numberwithin{equation}{section}
\begin{document}

\title{Non-detectable Patterns Hidden within Sequences of Bits}

\author[D. Allen]{David Allen}
\address{Department of Mathematics BMCC, CUNY, New York, New York 10007}
\email{dtallen@bmcc.cuny.edu}

\author[J. J. La Luz]{Jos\'{e} J. La Luz}
\address{Departmento de Matem\'aticas, Universidad de Puerto Rico,
\newline Industrial Minillas 170 Car 174, Bayam\'on, PR, 00959-1919}
\email{jose.laluz1@upr.edu }

\author[G. Salivia]{Guarionex Salivia}
\address{Department of Mathematics, Computer Science and Statistics, \newline Gustavus Adolphus College, 800 West College Avenue
Saint Peter, MN 56082}
\email{gsalivia@gustavus.edu}

\author[J. Hardwick]{Jonathan Hardwick}
\address{Department of Computer Science, Minnesota state University, \newline Mankato,
South Rd and Ellis Ave, Mankato, MN 56001}
\email{jonathan.hardwick@gmail.com}

\keywords{Cryptography, Pseudo-random number generator, $f$-vector, $h$-vector, Combinatorial Cryptology, Bit sequences, NIST}

\begin{abstract}
In this paper we construct families of bit sequences using combinatorial methods.  Each sequence is derived by converting a collection of numbers encoding certain combinatorial numerics from objects exhibiting symmetry in various dimensions.  Using the algorithms first described in \cite{ALDH} we show that the NIST testing suite described in publication 800-22 does not detect these symmetries hidden within these sequences.

\end{abstract}

\maketitle

\section{Introduction}

In previous work \cite{ALDH} the authors took a random graph $R_G$ and grew it using the cone (Definition \ref{Cone}) to construct families of higher dimensional objects called simplicial complexes.  Each of these complexes comes equipped with a vector that encodes the number of $i$ simplicies.  One of the main thrusts of \cite{ALDH} was to show that the bit sequence derived from the \textit{$f$-vector} of these higher dimensional objects was deemed ``random" by the NIST testing suite \cite{NIST}.  In the current paper, we show that the NIST test suite fails to see patterns in bit sequences derived from combinatorial objects that are symmetrical in a certain sense (Definition \ref{symmetrical}).

Given a simplicial complex $\mathcal{K}$ it is possible to determine a related combinatorial object called a \textit{simple convex polyhedron} whose faces are roughly built out of cones over the geometric realization of certain posets.  In the case that $\mathcal{K}$ is nice, say a simplicial sphere, then the dual $P$ is a \textit{simple convex polytope}.  Polyhedra have $f$-vectors and they encode the number of faces in a given dimension, but we are primarily concerned with those $P$ that are dual to those ``nice" $\mathcal{K}$.  Exactly how this duality works is explained in \S 4.  It is worth noting that determining these vectors and classifying the combinatorial type of $P$ are difficult problems  \cite{KP}.

Simple convex polytopes are interesting and show up in various fields of mathematics. They are well behaved in certain ways and there are other vectors associated to them called the $h$-vector and they exhibit a symmetry described by the Dehn-Sommerville relations.  For such polytopes, these vectors also satisfy a collection of inequalities that are described by the well-known $g$-theorem \cite{B,BP1}.

In this paper we focus on a particularly nice and well-understood family of simple convex polytopes; the standard $n$-simplex $\Delta^n$, for some positive integer $n \geq 1$.  The face structure is understood and the corresponding vectors mentioned above can be determined as well as their dual simplicial complexes (and their respective $f$-vectors).  This group of polytopes is sufficient in demonstrating the claims of the paper.  Namely, the bits derived from complexes built out of the duals of $\Delta^n$ are determined to be random by the NIST test suite even though they are not.

More is true, indeed; if we let $\mathcal{K}$ be the dual of $\Delta^n$, then we construct a family of simplicial complexes $C^j(\mathcal{K})$ and show that they are not dual to a simple convex polytope by applying the $g$-theorem.


\section{Main Results}

The contributions of this paper are:\\[.2ex]
\begin{center}
\begin{enumerate}
  \item Construct a family of simplicial complexes that exhibit a symmetry not detected by the NIST test suite. \\[.8ex]
  \item Show that the family of simplicial complexes are not dual to simple polyhedra. \\
\end{enumerate}
\end{center}

Before stating the main results we list the most pertinent definitions.  Let $\mathcal{K}$ be an $n$-dimensional simplicial complex on the set $\{1,\ldots,n-1\}$, then the \textit{cone} on $\mathcal{K}$ is as follows: $C(\mathcal{K}) =   \{   x\cup \{n\}   |  x\in \mathcal{K}  \}  \cup \mathcal{K}$.  Let $j$ be a positive integer, then when the cone is iterated $j$-times, we refer to it as the $j^{th}$-cone on $\mathcal{K}$ and write it as $C^j(\mathcal{K})$.  The dimension of the complex increases as $j$ increases.

To keep track of the number of $i$ simplicies in $\mathcal{K}$ or $C^j(\mathcal{K})$ there is the \textit{$f$-vector} and it is the vector with the number of $i$ simplicies in the $i^{th}$ component.  We represent the $i^{th}$ component of $\overrightarrow{f}(\mathcal{K})$ by $f_{i}(\mathcal{K})$.  The vector is very often written $(f_0,...,f_{n-1})$ where the last component counts the number of maximal dimensional simplicies in $\mathcal{K}$ (as written this vector encodes the number of $i$-simplicies in an $(n-1)$-dimensional simplicial complex).  A simplicial complex $\mathcal{K}$ is \textit{symmetrical} if $f_i(\mathcal{K}) = f_{n-i-1}(\mathcal{K})$ for $0 \leq i \leq n-1$.  When the components of this vector are converted into bit sequences, the NIST test suite classifies these as random. The results of the testing can be found in the following GitHub public repository ~\url{https://GitHub.com/gnexeng/coning-analysis.git}. This is a fork of the repository used in ~\cite{ALDH}. 

When $\mathcal{K}$ is a \textit{simplicial sphere} (meaning its geometric realization is homeomorphic to a sphere), then there is a simple polytope $P$ that is dual to $\mathcal{K}$ and vice-versa.  We briefly recall the construction; the interested reader can refer to \S4 where a more detailed description is given.  The vertices of $\mathcal{K}$ correspond to the codimension one faces of $P$, the edges correspond to the codimension two faces of $P$ so on and so forth. Given this, the polytope $P$ will satisfy the criteria of the $g$-theorem which describes certain symmetry conditions and inequalities of a vector related to $P$ called the $h$-vector as described by \cite{BP1}. It is related to the vector whose components counts the number of faces of $P$ \cite{BP1}.  Unfortunately, this too, is called the $f$-vector, but when $\mathcal{K}$ is dual to $P$ then there is the following relation that relates the two noting that $(h_0, h_1,\ldots,h_n)$ is the $h$-vector of the dual:  $h_0t^n + \cdots+ h_{n-1}t + h_n  = (t-1)^n + f_0(t-1)^{n-1} + \cdots + f_{n-1}$.

In \cite{B}, the $g$-theorem is stated in terms of certain ``$g_i$" that satisfy a variety of inequalities and they too are related to the components $f_i$ in the $f$-vector of $P$.  If a general $P$ does not satisfy the conditions of the $g$-theorem then it is not simple.  If $\mathcal{K}$ is a random graph $R_G$, then there are equations that follow from the $g$-theorem that one can use to show that the simplicial complexes $C^j(R_G)$ are not simplicial spheres (meaning, the dual $P$ is not simple) for $j$ sufficiently large.\\[.5ex]

The main results are:

\begin{Proposition}\label{No g Theorem}
For a graph $G$ such that $f_1(G) \neq 0$, let $C^{j}(G) = \mathcal{K}^{j+1}$. For $j$ sufficiently large, the complexes $\mathcal{K}^{j+1}$ are not dual to a polytope that satisfies the $g$-theorem.
\end{Proposition}

Proposition \ref{No g Theorem} applies to the case $G = R_G$, a random graph.  In fact, Theorem \ref{No g Theorem} provides us with exactly the number of cones that have to be applied before passing through the class of simple polytopes.
\begin{Theorem} \label{N simplex}
Let $n >1$ and suppose $P^n$ is a polytope such that $\overrightarrow{h}(P^n) = (1,\ldots,1)$, then the $n-1$ dimensional dual simplicial complex $K_P$ is symmetrical.  
\end{Theorem}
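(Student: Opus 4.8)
The plan is to exploit the polynomial identity relating the $h$-vector of $P^n$ to the $f$-vector of its dual $K_P$ recalled in the introduction, namely
\[
h_0 t^n + \cdots + h_{n-1} t + h_n = (t-1)^n + f_0 (t-1)^{n-1} + \cdots + f_{n-1},
\]
and simply to read off the components $f_i$ under the hypothesis that every $h_i$ equals $1$. First I would substitute $\overrightarrow{h}(P^n) = (1,\ldots,1)$ into the left-hand side, collapsing it to the geometric sum $t^n + t^{n-1} + \cdots + t + 1 = \frac{t^{n+1}-1}{t-1}$. To match this against the right-hand side, the clean move is the change of variable $t = s+1$, equivalently expanding everything in powers of $(t-1)$.

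Under this substitution the left-hand side becomes $\frac{(s+1)^{n+1}-1}{s} = \sum_{m=0}^{n} \binom{n+1}{m+1} s^m$, while the right-hand side is already written in powers of $s$ as $s^n + \sum_{j=0}^{n-1} f_j s^{n-1-j}$. I would then equate coefficients of like powers of $s$. The top-degree term $s^n$ matches automatically with coefficient $1$ on both sides, and comparing the coefficient of $s^m$ for $0 \leq m \leq n-1$ yields $f_{n-1-m} = \binom{n+1}{m+1}$; reindexing produces the closed form $f_i(K_P) = \binom{n+1}{i+1}$ for each $0 \leq i \leq n-1$. (As a sanity check, this is exactly the face count of the boundary of the $n$-simplex, the expected dual of the simple polytope with all-ones $h$-vector.)

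Finally, symmetry is immediate from the symmetry of binomial coefficients:
\[
f_i(K_P) = \binom{n+1}{i+1} = \binom{n+1}{(n+1)-(i+1)} = \binom{n+1}{n-i} = f_{n-i-1}(K_P),
\]
which is precisely the condition defining a symmetrical complex in the sense of Definition \ref{symmetrical}. The only place demanding genuine care is the index bookkeeping: the $h$-polynomial carries $n+1$ coefficients and degree $n$, whereas the $f$-vector is indexed from $0$ to $n-1$ with the leading $(t-1)^n$ term effectively playing the role of an ``$f_{-1}=1$'' convention, so the off-by-one alignment between polynomial degree and $f$-index is the one spot where an error could creep in. Beyond this routine alignment I do not anticipate a substantive obstacle.
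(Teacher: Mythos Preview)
Your proof is correct and reaches exactly the same closed form $f_i(K_P)=\binom{n+1}{i+1}$ that the paper obtains, with the final symmetry step identical. The route, however, is slightly different. The paper starts from the componentwise formula $f_{n-1-k}=\sum_{q=k}^n\binom{q}{k}h_{n-q}$, sets all $h_i=1$, and then invokes the hockey-stick identity (summation along a diagonal of Pascal's triangle) to collapse $\sum_{q=k}^n\binom{q}{k}$ to $\binom{n+1}{k+1}$. You instead work at the level of the generating polynomial: substituting $t=s+1$ turns the all-ones $h$-polynomial into $\frac{(s+1)^{n+1}-1}{s}$, and expanding this by the binomial theorem hands you the coefficients $\binom{n+1}{m+1}$ directly. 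Your approach is a bit cleaner in that the hockey-stick identity is absorbed into the single polynomial expansion rather than cited separately; the paper's approach has the mild advantage of making the connection to Pascal's triangle (a theme it emphasizes elsewhere) more explicit. Your caution about the off-by-one alignment is well placed but your bookkeeping is fine.
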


From a testing perspective the main results are as follows: 

\begin{enumerate}
  \item The components of the $f$-vectors dual to $P$ in Theorem \ref{N simplex} can be found in Pascal's Triangle.  The NIST test suite views the converted bit sequences as random. (see section \S\ref{bitconversions} and \S\ref{results}\\[.5ex] 

  \item There is a family of complexes given by the $j^{th}$-cone on those complexes constructed from Pascal's Triangle that generate bit sequences that NIST classifies as random. 

\end{enumerate}

The paper is set-up as follows.  In \S 3 we provide a brief overview of simplicial complexes and the algorithms used to generate $C^j(\mathcal{K})$. We also discuss the notion of the $f$-vector and, for the convenience of the reader, list a few calculations made in \cite{ALDH}. In \S 4 polyhedra and their duals are discussed along with their $f$ and $h$-vectors. The relation between the $f$ and $h$-vectors is given as well as specific low dimensional examples. For completeness, the $g$-Theorem is listed and a reference is provided for additional details. The main theorems needed in the sequel are proved in this section too.  \S 5 contains a careful analysis of bit conversions.  Here, we discuss truncation methods that are used to ensure that the bit-stream fits into the NIST suite.  \S 6 lists the algorithm to convert $f$-vectors to $h$-vectors and vice versa and \S 7 contains a summary of the results. \S8 is an Appendix and it contains additional commentary regarding the original code \cite{ALDH}, the improvement that was made and how to replicate the new and improved results.

\section{Simplicial Complexes and Iterated Cones}

For the convenience of the reader we list background material from \cite{ALDH} along with a few other additions.
\begin{Definition}
Let $X$ be a non-empty set. A simplicial complex $\mathcal{K}$ on $X$ is a non-empty subset of the power set of $X$ such that if $x\in \mathcal{K}$ and $y\subset x$ then $y\in \mathcal{K}$.
\end{Definition}

In general $X=\{1,2,\ldots,m\}=[m]$ where $m\in \mathbb{N}$. The sets in $\mathcal{K}$ are called simplices and the dimension of a simplex $x$, denoted $dim(x)$ is defined to be $|x|-1$. The dimension of a simplicial complex is $max\{dim(x)|x\in \mathcal{K}\}$.  If a simplicial complex $\mathcal{K}$ is $n-1$ dimensional we simply write $\mathcal{K}^{n-1}$.  From the definition it follows that any simple graph can be regarded as a simplicial complex of dimension at most one.  If such a graph has at least one edge, then it is a one dimensional.

\begin{Remark}
Notice that since $\mathcal{K}$ is a non-empty set and since $\emptyset \subseteq x$ for all subsets of $X$, then by definition $\emptyset\in \mathcal{K}$.
\end{Remark}

\begin{Example}\label{E1}
The set $\mathcal{K}=\{\emptyset,\{1\},\{2\},\{3\},\{1,2\}\}$ is a one-dimensional simplicial complex on the set $\{1,2,3\}$.\\
\begin{minipage}{\linewidth}
$$\begin{tikzpicture}
\filldraw [black] (-10,0) node[above left =3pt]{$1$} circle (2.5pt);  
\filldraw [black] (-10,2) node[above left =3pt]{$2$} circle (2.5pt);  
\draw[thick] (-10,0) -- (-10,2);
\filldraw [black] (-8,0) node[above left =3pt]{$3$} circle (2.5pt);
\end{tikzpicture}$$
        \end{minipage}
\end{Example}

\begin{Notation}
For a fixed $m\in \mathbb{N}$ let $\widehat{\Delta}^m=P([m])$ where $P([m])$ is the power set on $[m]$.
\end{Notation}
 For instance, $\widehat{\Delta}^2$ is a \textit{2-simplex}.  Pictorially, it is

\begin{minipage}{\linewidth}
$$\begin{tikzpicture}
\filldraw[thick,gray!30]  (-10,0) -- (-10,2)--(-8,0)--(-10,0);
\draw[thick]  (-10,0) -- (-10,2)--(-8,0)--(-10,0);
\filldraw [black] (-10,0) node[above left =3pt]{$1$} circle (2.5pt);  
\filldraw [black] (-10,2) node[above left =3pt]{$2$} circle (2.5pt);  
\filldraw [black] (-8,0) node[above right =3pt]{$3$} circle (2.5pt);
\end{tikzpicture}$$
        \end{minipage}\\

The following can be found in \cite{BP1}
\begin{Definition}\label{Cone}
Let $\mathcal{K}$ be a simplicial complex on the set $\{1,\ldots,n-1\}$. We define and denote the cone over $\mathcal{K}$ as follows:

 $$C(\mathcal{K}) =  \{   x\cup \{n\} |  x\in \mathcal{K}  \}  \cup \mathcal{K}$$

\end{Definition}

Notice that if $\mathcal{K}$ is an $n$-dimensional complex then $C(\mathcal{K})$ is an $(n+1)$-dimensional complex.  Since $C(\mathcal{K})$ is a simplicial complex it is possible to apply the cone again.

\footnotesize
\begin{minipage}{\linewidth}
$$
\begin{tikzpicture}[scale=.8]
\filldraw[black] (-1,0) node[below=.1pt]{$1$} circle (1pt);
\filldraw[black] (1,0) node[below=.1pt]{$2$} circle (1pt);
\draw (0,-.7) node{\footnotesize $dim(\mathcal{K})=0$\normalsize};
\draw[->,thick] (1.2,.6)--(4,.6);
\draw (2.5,.6) node[below=.1pt]{\text{cone}};
\draw (4.5,0)--(5.5,1)--(6.5,0);
\filldraw[black] (4.5,0) node[below=.1pt]{$1$} circle (1pt);
\filldraw[black] (6.5,0) node[below=.1pt]{$2$} circle (1pt);
\filldraw[black] (5.5,1) node[above=.1pt]{$3$} circle (1pt);
\draw[->,thick] (5.5,-1)--(5.5,-3);
\draw (5.5,-.7) node{\footnotesize $dim(C(\mathcal{K}))=1$\normalsize};
\draw (6,-2) node{\text{cone}};
\filldraw[color=gray!20] (4.5,-4.5)--(5.5,-3.5)--(6.5,-4.5)--(5.5,-5.5)--(4.5,-4.5);
\draw (4.5,-4.5)--(5.5,-3.5)--(6.5,-4.5)--(5.5,-5.5)--(4.5,-4.5);
\draw (5.5,-3.5)--(5.5,-5.5);
\filldraw[black] (4.5,-4.5) node[below=.1pt]{$1$} circle (1pt);
\filldraw[black] (6.5,-4.5) node[below=.1pt]{$2$} circle (1pt);
\filldraw[black] (5.5,-3.5) node[above=.1pt]{$3$} circle (1pt);
\filldraw[black] (5.5,-5.5) node[below=.1pt]{$4$} circle (1pt);
\draw (5.5,-6.2) node{\footnotesize $dim(C^{2}(\mathcal{K}))=2$\normalsize};
\end{tikzpicture}
$$
        \end{minipage}\\[2ex]
\normalsize

We write the \textit{$j^{th}$ cone on $\mathcal{K}$} as $C^j(\mathcal{K})$. We have the following from \cite{ALDH} regarding a  graph $G$.

\begin{Lemma}\label{Dim of n cone}
The dimension of $C^j(G)$ is $j+1$.
\end{Lemma}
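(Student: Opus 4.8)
The plan is to proceed by induction on $j$, resting on two ingredients. The first is the observation recorded immediately after Definition~\ref{Cone}, namely that coning raises dimension by exactly one, so that $\dim C(\mathcal{K}) = \dim(\mathcal{K}) + 1$ for any simplicial complex $\mathcal{K}$. The second is the base fact, already noted in \S3, that a graph $G$ with at least one edge is a one-dimensional simplicial complex, so that $\dim(G) = 1$; this is the implicit standing hypothesis on $G$, and it is what makes the asserted value $j+1$ (rather than $j$) correct.

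For the base case $j = 1$ I would write $C^1(G) = C(G)$ and apply the increment observation to $\dim(G) = 1$, obtaining $\dim C(G) = 2 = 1 + 1$, as required. For the inductive step I assume $\dim C^j(G) = j+1$, write $C^{j+1}(G) = C\bigl(C^j(G)\bigr)$, and apply the increment observation once more:
\[
\dim C^{j+1}(G) = \dim C^j(G) + 1 = (j+1) + 1,
\]
which is precisely the asserted formula at index $j+1$. This closes the induction.

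The only point meriting care is the increment observation itself, should one prefer to verify it rather than merely cite it. If $x$ is a maximal simplex of $\mathcal{K}$ realizing $\dim(\mathcal{K})$, then because $\mathcal{K}$ is supported on $\{1,\ldots,n-1\}$ the apex $n$ does not belong to $x$, so $x \cup \{n\}$ is a simplex of $C(\mathcal{K})$ with $|x \cup \{n\}| = |x| + 1$, whence $\dim(x \cup \{n\}) = \dim(x) + 1$; since no simplex of $C(\mathcal{K})$ can have larger cardinality than the cone over a top simplex of $\mathcal{K}$, the dimension rises by exactly one. I do not anticipate any real obstacle here: once the increment fact is in place, the argument is a routine two-line induction, and the only thing one must not overlook is the requirement that $G$ carry an edge so that the induction starts from $\dim(G) = 1$.
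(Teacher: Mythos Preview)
Your argument is correct. Note, however, that the paper does not actually supply its own proof of this lemma: it is imported from \cite{ALDH} with the remark that the proofs can be found there. Your induction, resting on the dimension-increment observation that the paper records immediately after Definition~\ref{Cone} together with the standing hypothesis $f_1(G)\neq 0$ so that $\dim(G)=1$, is precisely the natural argument and is what one would expect the cited proof to be.
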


In general, counting the number of $i$-simplicies for a given simplicial complex is computationally difficult \cite{KP}.  There is a vector that encodes all of these numerics.  We list the following critical definition:

\begin{Definition}\label{fvector}
The $f$-vector of an $(n-1)$-dimensional simplicial complex $\mathcal{K}$, denoted by $\overrightarrow{f}(\mathcal{K})$, is the vector with the number of $i$ simplicies in the $i^{th}$ component. We represent the $i^{th}$ component of $\overrightarrow{f}(\mathcal{K})$ by $f_{i}(\mathcal{K})$.
\end{Definition}

For a given fixed $\mathcal{K}$, the following notation is usually used to denote this vector $\overrightarrow{f}(\mathcal{K}) = (f_0,f_1,\ldots,f_{n-1})$. Often it is simply written as $(f_0,f_1,\ldots,f_{n-1})$. For the purposes of making calculations, recall, $f_{-1}(\mathcal{K}) = 1$ \cite{BP1}.  We have the following calculations from \cite{ALDH}; the proofs can be found there.

\begin{Example}
For Example \ref{E1} we have $\overrightarrow{f}(\mathcal{K})=(3,1)$.
\end{Example}

\begin{Lemma} \label{Count on Cone}
For a graph $G$, $f_0(C^j(G)) = f_0(G) + j$ and $f_1(C^j(G)) =  f_1(C^{j-1}(G)) +  f_0(C^{j-1}(G))$
\end{Lemma}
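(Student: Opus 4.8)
The plan is to reduce both identities to a single counting fact for one application of the cone, namely that
$$f_i(C(\mathcal{K})) = f_i(\mathcal{K}) + f_{i-1}(\mathcal{K})$$
for every $i \geq 0$, and then to specialize to $i = 0$ and $i = 1$. First I would fix a simplicial complex $\mathcal{K}$ on $\{1,\ldots,n-1\}$ and partition the $i$-simplices of $C(\mathcal{K})$ according to whether or not they contain the apex vertex $n$. By Definition \ref{Cone}, a simplex of $C(\mathcal{K})$ that does not contain $n$ is precisely a simplex of $\mathcal{K}$, so the $i$-simplices of this type are counted by $f_i(\mathcal{K})$. An $i$-simplex that does contain $n$ has the form $y \cup \{n\}$ with $y \in \mathcal{K}$, and since $y \subseteq \{1,\ldots,n-1\}$ we have $n \notin y$, so that $|y \cup \{n\}| = |y| + 1$; hence $y \cup \{n\}$ has dimension $i$ exactly when $y$ has dimension $i-1$. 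The assignment $y \mapsto y \cup \{n\}$ is thus a bijection between the $(i-1)$-simplices of $\mathcal{K}$ and the $i$-simplices of $C(\mathcal{K})$ containing $n$, giving the count $f_{i-1}(\mathcal{K})$. Adding the two disjoint families yields the displayed recursion.

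For the first identity I would set $i = 0$. Using the convention $f_{-1}(\mathcal{K}) = 1$ recorded after Definition \ref{fvector}, the recursion becomes $f_0(C(\mathcal{K})) = f_0(\mathcal{K}) + 1$; concretely, the only new vertex produced by coning is $\emptyset \cup \{n\} = \{n\}$. A straightforward induction on $j$, with $\mathcal{K} = G$ serving as the base complex, then gives $f_0(C^j(G)) = f_0(G) + j$. For the second identity I would set $i = 1$ in the recursion to obtain $f_1(C(\mathcal{K})) = f_1(\mathcal{K}) + f_0(\mathcal{K})$; here the new edges are exactly the joins $\{k,n\}$ from each vertex $\{k\}$ of $\mathcal{K}$ to the apex. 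Specializing $\mathcal{K} = C^{j-1}(G)$ and using $C(C^{j-1}(G)) = C^j(G)$ produces $f_1(C^j(G)) = f_1(C^{j-1}(G)) + f_0(C^{j-1}(G))$, exactly as claimed.

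There is no serious obstacle here; the argument is purely combinatorial. The one point demanding care is the dimension bookkeeping at the bottom of the range: one must keep the empty face in play and invoke $f_{-1} = 1$ so that the $i = 0$ case yields the additive constant $+1$ rather than a spurious term $f_0(\mathcal{K})$, and one must confirm that $n \notin \{1,\ldots,n-1\}$ so that coning strictly increases the cardinality of each simplex and the two families (those containing $n$ and those not) are genuinely disjoint.
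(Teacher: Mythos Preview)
Your argument is correct. The partition of the simplices of $C(\mathcal{K})$ into those containing the apex and those not, together with the bijection $y \mapsto y \cup \{n\}$, is exactly the right mechanism, and your handling of the boundary case via $f_{-1}=1$ is clean.

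As for comparison: the present paper does not actually prove this lemma. It is imported from \cite{ALDH} with the remark ``the proofs can be found there,'' and the only trace of the argument in this paper is the unproved recursion stated just after the $f$-vector algorithm, namely $y_0 = x_0 + 1$, $y_i = x_{i-1} + x_i$, $y_{n+1} = x_n$. Your general identity $f_i(C(\mathcal{K})) = f_i(\mathcal{K}) + f_{i-1}(\mathcal{K})$ is precisely this recursion, so your proof both supplies what the paper omits and matches the framework the paper relies on.
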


As an immediate Corollary of Lemma \ref{Count on Cone} we re-write $f_1(C^n(R_G))$ by unraveling the recursion.

\begin{Corollary}\label{recursion}
For a graph $G$ the number of edges of $C^j(G$) can be re-written as $f_1(C^j(G)) = f_1(G) + \sum_{k = 0}^ {j-1} f_0(C^k(G))$
\end{Corollary}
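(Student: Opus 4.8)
The plan is to prove the identity by induction on $j$, treating the second formula of Lemma \ref{Count on Cone} as a first-order linear recurrence and simply unwinding it, exactly as the phrase ``unraveling the recursion'' in the statement suggests. The observation driving everything is that $f_1(C^j(G)) = f_1(C^{j-1}(G)) + f_0(C^{j-1}(G))$ expresses the edge count at stage $j$ as its value at stage $j-1$ plus a known additive term; iterating this collapses into a telescoping sum whose summands are exactly the vertex counts $f_0(C^k(G))$. For the base case I would take $j=0$, where the sum on the right is empty and both sides reduce to $f_1(G)$ once we adopt the convention $C^0(G) = G$ (equivalently one may start at $j=1$, where $f_1(C^1(G)) = f_1(G) + f_0(G)$ matches the single-term sum).

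For the inductive step, assuming $f_1(C^{j-1}(G)) = f_1(G) + \sum_{k=0}^{j-2} f_0(C^k(G))$, I would substitute this into the recursion of Lemma \ref{Count on Cone} to obtain $f_1(C^j(G)) = f_1(G) + \sum_{k=0}^{j-2} f_0(C^k(G)) + f_0(C^{j-1}(G))$, and then absorb the final term into the sum by extending its upper index from $j-2$ to $j-1$. This immediately yields the claimed closed form $f_1(C^j(G)) = f_1(G) + \sum_{k=0}^{j-1} f_0(C^k(G))$.

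Since the recurrence is linear and the forcing term $f_0(C^k(G))$ is explicitly known, there is essentially no obstacle here; the only points requiring care are correct bookkeeping of the summation indices and the convention $C^0(G) = G$, so that $f_1(C^0(G)) = f_1(G)$ anchors the induction. If one wished, the first part of Lemma \ref{Count on Cone} gives $f_0(C^k(G)) = f_0(G) + k$, which would let the sum be evaluated in fully closed form as $f_1(G) + j\,f_0(G) + \binom{j}{2}$; but the corollary as stated only asks for the telescoped expression, so I would stop at the displayed formula.
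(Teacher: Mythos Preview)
Your proof is correct and follows exactly the approach the paper indicates: it states the corollary as immediate from Lemma~\ref{Count on Cone} by ``unraveling the recursion,'' without giving further details, and your induction is precisely that unwinding written out in full.
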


If $R_G$ is a random graph, we re-iterate that the $f$-vector of $C^j(R_G)$ will be the basis, after conversion (see \S Bit Conversions for more details), of  a sequence of bits that is to be analyzed by the suite \cite{NIST}.  In previous work \cite{ALDH}, the authors discussed $C^j(R_G)$ and conducted a series of tests using \cite{NIST} verifying that the construction generated families of random bits coming from a random graph.  The implication is that our initial construction preserves this property as new simplicial complexes are ``grown" from the random graph $R_G$.

When reference to a random graph is made, it refers to those graphs described in \cite{ER} and we use the notation $R_G$ to label such.  These graphs have $f$-vectors:  $(f_0, f_1)$ and we assume that $f_1(R_G) \neq 0$.  For the convenience of the reader we list the algorithm implemented in \cite{ALDH} to generate such graphs and to determine the $f$-vector when the cone is applied successively.\\[.5ex]

\begin{center}
\textbf{Random Graph Algorithm}
\end{center}
\begin{algorithmic}
\State Input:  $n$, $p$
\State{Initialize  $M_{ij} = 0 \; \; \forall i, j$} \State {$M_{ij}$ is the $(ij)^{th}$ entry in the  matrix $M$}
\For{ $ i, j =1,\ldots,n$}
\State{ Generate random number $r$} \EndFor
\If{ $r < p$ and $i < j$} \State{ $M_{ij} = 1$} \Else \State {$M_{ij} = 0$} \EndIf
\State{Construct $R_G$ from the matrix $M$}
\end{algorithmic}

In \cite{ALDH} the following algorithm was used to determine the $f$-vector of $C^j(R_G)$:\\[.5ex]

\begin{center}
\textbf{Computing the $f$-vector of the $j^{th}$-cone on $R_G$}
\end{center}

\begin{algorithmic}
\State Input:  Random graph $R_G$
\State{Compute  $\overrightarrow{f}(R_G)$}
\For{ $j =1,\ldots,n$}
\State{Compute  $\overrightarrow{f}(C^{j}(R_G)$} \EndFor
\end{algorithmic}

Observe:  The random graph $R_G$ is generated using the algorithm above and it is fixed, then we compute $\overrightarrow{f}(R_G)$ to initiate the algorithm mentioned above.  Suppose $\overrightarrow{f}(C^{n-1}(R_G)) = (x_0,x_1,\ldots,x_n)$, then $\overrightarrow{f}(C^{n}(R_G)) = (y_0,y_1,\ldots,y_{n+1})$ where for  $ 1\leq i \leq n+1$

\begin{align*}
&y_0     = x_0 + 1  \\
& y_i    = x_{i-1} + x_i  \\
&y_{n+1} = x_n \\
\end{align*}

\section{Dual Polyhedra and Symmetry}
We begin by following the treatment in \cite{B}.  Additional details concerning polytopes and polyhedra can be found there.  For a fixed integer $q >0$ we work in the Euclidean space $\mathbb{R}^q$  and all scalars will be assumed to be real.  Let $x_1,\ldots,x_n$ be points in $\mathbb{R}^q$, then an \textit{affine combination} is the linear combination

$$\sum_{i=1}^n c_ix_i$$

where $c_1 + \cdots + c_n = 1$.  A collection of points is said to be \textit{affinely independent} if $c_1x_1 + \cdots +c_nx_n = 0$ then the $c_i = 0$.  We say a subset $C$ of $\mathbb{R}^q$ is a \textit{convex set} if the following conditions hold for all $x_1, x_2 \in C$ and scalars $c_1$ and $c_2$ satisfying:

\begin{center}
\begin{enumerate}
  \item $c_1x_1 + c_2x_2 \in C$ such that $c_1, c_2 \geq 0$.\\
  \item $c_1 + c_2 = 1$
\end{enumerate}
\end{center}

\textit{Affine combinations} are linear combinations where the scalars satisfy the conditions above.  More specifically, let $x_1,\ldots,x_n$ be points in $\mathbb{R}^q$, then a convex combination is a linear combination $c_1x_1+ \cdots + c_nx_n$ such that:

\begin{center}
\begin{enumerate}
  \item $c_1 + \cdots + c_n =1$. \\
  \item $c_i \geq 0$
\end{enumerate}
\end{center}

We have the following

\begin{Theorem}
A subset $C$ of $\mathbb{R}^d$ is convex if and only if any convex combination of points from $C$ is again in $C$.
\end{Theorem}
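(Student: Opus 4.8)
The plan is to prove the two implications separately, observing that one direction is immediate from the definition while the other requires an induction on the number of points in the convex combination. Throughout, I take the definition of convexity given just above the statement: $C$ is convex precisely when it is closed under two-point convex combinations, i.e. $c_1 x_1 + c_2 x_2 \in C$ whenever $x_1,x_2 \in C$, $c_1,c_2 \geq 0$, and $c_1 + c_2 = 1$.

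For the backward implication, suppose $C$ is closed under every convex combination of its points. Specializing to combinations of exactly two points $x_1,x_2 \in C$ with coefficients satisfying $c_1 + c_2 = 1$ and $c_i \geq 0$ recovers the defining condition for convexity verbatim, so $C$ is convex. This direction costs nothing beyond unpacking the definition.

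The forward implication is the substance. Assuming $C$ is convex, I would show by induction on $n$ that any convex combination $c_1 x_1 + \cdots + c_n x_n$ of points of $C$ lies in $C$. The base cases $n = 1$ and $n = 2$ are the trivial combination and the definition of convexity, respectively. For the inductive step I would first dispose of the degenerate case in which some coefficient equals $1$: then all others vanish and the combination is a single point of $C$. Otherwise every $c_i < 1$, so $1 - c_n > 0$, and I regroup
\begin{equation*}
\sum_{i=1}^{n} c_i x_i \;=\; c_n x_n \;+\; (1 - c_n)\sum_{i=1}^{n-1} \frac{c_i}{1 - c_n}\, x_i .
\end{equation*}
The point $y = \sum_{i=1}^{n-1} \frac{c_i}{1-c_n} x_i$ is a convex combination of $n-1$ points of $C$, since its coefficients are nonnegative and sum to $\frac{1}{1-c_n}\sum_{i=1}^{n-1} c_i = \frac{1 - c_n}{1 - c_n} = 1$; by the induction hypothesis $y \in C$. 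The original expression is then the two-point convex combination $c_n x_n + (1 - c_n) y$ of $x_n, y \in C$, which belongs to $C$ by convexity, completing the induction.

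The only real subtlety, and the step I would be most careful with, is the bookkeeping around the degenerate cases: I must ensure $1 - c_n \neq 0$ before dividing, which is why the case of a coefficient equal to $1$ is isolated first (and why one may reindex so that the "split off" coefficient $c_n$ is the problematic one when needed). Apart from that, the argument is a routine induction, and no hypothesis beyond the stated two-point definition of convexity is required.
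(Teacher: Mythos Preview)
Your argument is correct and is precisely the standard induction proof of this classical fact. The paper does not actually prove the statement in-text: its proof environment consists solely of a citation to page~11 of Br{\o}nsted's \emph{An Introduction to Convex Polytopes}. The argument you have written---trivial backward direction, forward direction by induction on the number of points via the regrouping
\[
\sum_{i=1}^{n} c_i x_i \;=\; c_n x_n + (1-c_n)\sum_{i=1}^{n-1}\frac{c_i}{1-c_n}\,x_i,
\]
with the degenerate case $c_n = 1$ handled separately---is exactly the proof that reference contains, so there is no substantive difference in approach.
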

\begin{proof}
\cite{B} Pg 11.
\end{proof}


Given a subset $C$ as above, the intersection of all convex sets in $\mathbb{R}^q$ containing it is a convex set denoted by $conv(C)$.  This set is often referred to as the convex hull of $C$. When the set is clear we will simply say the convex hull.

We have the following definition from \cite{B} pg. 44.

\begin{Definition}
A polytope $P$ is the convex hull of a non-empty finite set.
\end{Definition}

Suppose $P = conv(\{ x_1,\ldots,x_n \})$ for points $x_i \in \mathbb{R}^q$, then the dimension of $P$, $dim (P)$ is $k$ if the following conditions hold for a finite number of points $x_1,\ldots,x_k$ from the set $\{x_1,\ldots,x_n\}$.  First, there is a finite number of points $x_1,\ldots,x_{k+1}$ from the set $\{x_1,\ldots,x_n\}$ that are affinely independent.  Second, there is no such $k+2$ affinely independent points that can be chosen from $\{x_1,\ldots,x_n\}$.

A vertex of $P$ is a zero dimensional face.  The set of all vertices of $P$ will be denoted by $V(P)$ and $|V(P)|$ will be the cardinality of this set.  Proper faces $F$ of $P$ are polytopes such that $V(F) = F \bigcap Ver(P)$ \cite{B} pg. 45 Theorem 7.3.  Since they are polytopes they have a dimension given by the above.

$P$ can also be regarded as the intersection of finitely many halfspaces in a certain Euclidean space \cite{B, BP1} and such sets are often referred to as \textit{polyhedral sets}.  When the context is clear we will simply refer to $P$ as a polytope.  Given a polytope and a finite collection of halfspaces defining it, a supporting hyperplane is a hyperplane $\mathcal{H}$ such that $P \bigcap \mathcal{H} \neq \emptyset$ where the polytope is contained in one of the halfspaces determined by the hyperplane \cite{BP1}. In this context, a \textit{face} of $P$ is $\mathcal{H} \bigcap P = F$.

\begin{minipage}{\linewidth}
\small
$$\begin{tikzpicture}
\filldraw[color=gray!20] (-1,0)--(1,0)--(0,1)--(-1,0);
\draw (-1,0)--(1,0)--(0,1)--(-1,0);
\draw (-2,-1)--(1,2);
\draw[very thick] (-1,0)--(0,1);
\filldraw[black] (-1,0) node[left=.1pt]{$e_1$} circle (1pt);
\filldraw[black] (1,0) node[right=.1pt]{$e_2$} circle (1pt);
\filldraw[black] (0,1) node[ right=.1pt]{$e_3$} circle (1pt);
\draw (-2,-1.2) node{$H$} ;
\draw (-1.5,.7) node{$H\cap P=F$} ;
\end{tikzpicture}$$
\normalsize
\end{minipage}\\[2ex]

Other faces of $P$ include:  $P$, vertices and edges, to name a few.  For a given $P$ its boundary $\partial P = \bigcup_{F \subseteq P} F$.

Given $P$, a facet is a face of dimension $n-1$ and they are often called \textit{codimension one faces}. A $k$-dimensional polytope is called \textit{simple} if each zero face can be written as the intersection of exactly $k$ codimension one faces.  To clarify these points consider the following example, let $P$ be the cube:\\

\begin{minipage}{\linewidth}
$$
\begin{tikzpicture}
\draw (0,0)--(2,0)--(2,2)--(0,2)--(0,0);
\draw (.8,.7)--(2.7,.7)--(2.7,2.8)--(.8,2.8)--(.8,.7);
\draw (.8,.7)--(0,0);
\draw (2,0)--(2.7,.7);
\draw (2,2)--(2.7,2.8);
\draw (0,2)--(.8,2.8);
\filldraw[black] (0,0) node[below=.1pt]{$v_6$} circle (1pt);
\filldraw[black] (.8,.7) node[below right=.1pt]{$v_7$} circle (1pt);
\filldraw[black] (2,0) node[below right=.1pt]{$v_5$} circle (1pt);
\filldraw[black] (2.7,.7) node[below right=.1pt]{$v_8$}circle (1pt);
\filldraw[black] (2,2) node[above left=.1pt]{$v_1$}circle (1pt);
\filldraw[black] (0,2) node[above left=.1pt]{$v_2$} circle (1pt);
\filldraw[black] (2.7,2.8) node[above right=.1pt]{$v_4$} circle (1pt);
\filldraw[black] (.8,2.8) node[above left=.1pt]{$v_3$}circle (1pt);
\draw (1.5,4)node{$F_{5}$};
\draw[->](1.5,3.8)--(1.5,2.6);
\draw (4,1.6)node{$F_{3}$};
\draw[->](3.8,1.6)--(2.3,1.6);
\draw (-.7,.2)node{$F_{2}$};
\draw[->](-.5,.3)--(1.3,1.5);
\draw (-1.5,1.6)node{$F_{1}$};
\draw[-](-1.3,1.6)--(0,1.6);
\draw[dashed,->](0,1.6)--(.5,1.6);
\draw (1.5,-1)node{$F_{6}$};
\draw[-](1.5,-.8)--(1.5,0);
\draw[dashed,->](1.5,0)--(1.5,.5);
\draw (2.8,3.5)node{$F_{4}$};
\draw[-](2.8,3.4)--(2.3,2.8);
\draw[dashed,->](2.3,2.8)--(2,2.4);
\end{tikzpicture}$$
        \end{minipage}

The codimension one faces are $F_1,\ldots,F_6$ (these are the faces of the cube).  Each edge is the intersection of exactly two faces and these are the codimension two faces.  Each vertex is the intersection of exactly three faces (codimension one faces) and so these are the codimension three faces.  One could equally consider the dimensions of the faces rather than the codimension. In such a case, the faces $F_1,\ldots,F_6$ would be the two dimensional faces, the edges the one-dimensional faces and the vertices the zero-dimensional faces of the cube.

\begin{Example}

Following \cite{BP1}, let $P = \Delta^n$. Then this polytope is $conv( \{x_1,\ldots,x_{n+1} \} )$ where $x_i \in \mathbb{R}^n$.  This is called the \textit{$n$-dimensional simplex} and the points are not on a common affine hyperplane.  This is not to be confused with the simplicial complex $\widehat{\Delta}^n$ whose simplicies are the sets in the power set of $\{1,\ldots,n\}$.  Given $\Delta^n$, then for $i \leq n$, $\Delta^i$ is a face of $P$ of dimension at most $n$. If the points $x_i$ are the unit vectors $e_i$, then $\Delta^n$ is called the \textit{standard n-simplex} \cite{BP1} pg. 8.  For example, in $\mathbb{R}^3$, the standard 2-simplex is the convex hull of the unit vectors $e_1, e_2$ and $e_3$.  A picture can be found below:\\

\begin{minipage}{\linewidth}
$$\begin{tikzpicture}
\filldraw[color=gray!20] (1,0)--(0,1)--(-.5,-.5)--(1,0);
\draw[->] (0,0)--(1.6,0);
\draw[->] (0,0)--(0,1.6);
\draw[->] (0,0)--(-1,-1);
\draw (1,0)--(0,1)--(-.5,-.5)--(1,0);
\filldraw[black] (1,0) node[below=.1pt]{$e_2$} circle (1pt);
\filldraw[black] (0,1) node[above left=.1pt]{$e_3$} circle (1pt);
\filldraw[black] (-.5,-.5) node[below right=.1pt]{$e_1$} circle (1pt);
\end{tikzpicture}$$
        \end{minipage}
\end{Example}

We now follow the treatment in \cite{DJ}-specifically pages 425 and 430.  The main idea is that given a polytope one may construct a simplicial complex dual to it and vice-versa.  Assume $P$ is an $n$-dimensional simple polytope (defined as above) and let $\Im = \{F_0,\ldots,F_m\}$ be the set of facets.  Let $\mathcal{K}_P$ denote the dual simplicial complex with vertex set $\{F_0,\ldots,F_m\}$ with the requirement:  $\sigma = \{ F_0,\ldots,F_{j}\}$ span a $j+1$ simplex in $\mathcal{K}_P$ if and only if $ F_0 \cap \cdots \cap F_j \neq \emptyset$ in $P$.  We observe that a zero face in $P$ (a vertex) is a codimension $n$ face; hence, it is the intersection of exactly $n$ facets.  By the definition of the dual, such an intersection spans an $n-1$ simplex.  Therefore, the dual complex $\mathcal{K}_P$ is $(n-1)$-dimensional.  Furthermore, it is a simplicial sphere, roughly meaning, it is essentially a combinatorial representation of the sphere $S^{n-1}$.

\begin{Notation}
We fix the notation regarding the dual.  Given a simplicial complex $\mathcal{K}$ we will write the dual as $P$ (or $P_{\mathcal{K}}$ if we need to stress certain properties of this combinatorial object).  When convenient we may just say ``the dual".  If $P$ is given, then the dual, $\mathcal{K}_P$ will be written as $\mathcal{K}$ when there is no room for confusion.  For reasons of convenience certain authors use the notation $\mathcal{K}^*$ to denote the dual, especially when considering $\mathcal{K}^{**}$ (\cite{G}), but we will stick with the conventions mentioned above.  Given $\mathcal{K}$ the notation $P_{\mathcal{K}}$ is used in the paper \cite{DJ}.
\end{Notation}

\begin{Example}\label{Dual Square}
For the square we have \\
\begin{minipage}{\linewidth}
$$
\begin{tikzpicture}
\filldraw[color=gray!20] (0,0)--(0,2)--(2,2)--(2,0)--(0,0);
\draw (0,0)--(0,2)--(2,2)--(2,0)--(0,0);
\filldraw[black] (0,0) node[below=.1pt]{$v_1$} circle (1pt);
\filldraw[black] (0,2) node[above=.1pt]{$v_2$} circle (1pt);
\filldraw[black] (2,2) node[above=.1pt]{$v_3$} circle (1pt);
\filldraw[black] (2,0) node[below=.1pt]{$v_4$} circle (1pt);
\draw (-.3,1) node{$F_1$} ;
\draw (1,2.3) node{$F_4$} ;
\draw (2.3,1) node{$F_3$} ;
\draw (1,-.3) node{$F_2$} ;
\end{tikzpicture}
\begin{tikzpicture}[scale=.8]
\draw (0,-.5)--(-1.5,1)--(0,2.5)--(1.5,1)--(0,-.5);
\filldraw[black] (0,-.5) node[below=.1pt]{$v_{F_2}$} circle (1pt);
\filldraw[black] (-1.5,1) node[left=.1pt]{$v_{F_1}$} circle (1pt);
\filldraw[black] (0,2.5) node[above=.1pt]{$v_{F_4}$} circle (1pt);
\filldraw[black] (1.5,1) node[right=.1pt]{$v_{F_4}$} circle (1pt);
\draw (-.9,-.1) node{$e_{v_1}$} ;
\draw (-1,2) node{$e_{v_4}$} ;
\draw (1,-.1) node{$e_{v_2}$} ;
\draw (1,2) node{$e_{v_2}$} ;
\end{tikzpicture}$$
\end{minipage}\\[2ex]
\end{Example}

We now give a slightly more complicated example

\begin{Example}

Recall, the cube

\begin{minipage}{\linewidth}
$$
\begin{tikzpicture}
\draw (0,0)--(2,0)--(2,2)--(0,2)--(0,0);
\draw (.8,.7)--(2.7,.7)--(2.7,2.8)--(.8,2.8)--(.8,.7);
\draw (.8,.7)--(0,0);
\draw (2,0)--(2.7,.7);
\draw (2,2)--(2.7,2.8);
\draw (0,2)--(.8,2.8);
\filldraw[black] (0,0) node[below=.1pt]{$v_6$} circle (1pt);
\filldraw[black] (.8,.7) node[below right=.1pt]{$v_7$} circle (1pt);
\filldraw[black] (2,0) node[below right=.1pt]{$v_5$} circle (1pt);
\filldraw[black] (2.7,.7) node[below right=.1pt]{$v_8$}circle (1pt);
\filldraw[black] (2,2) node[above left=.1pt]{$v_1$}circle (1pt);
\filldraw[black] (0,2) node[above left=.1pt]{$v_2$} circle (1pt);
\filldraw[black] (2.7,2.8) node[above right=.1pt]{$v_4$} circle (1pt);
\filldraw[black] (.8,2.8) node[above left=.1pt]{$v_3$}circle (1pt);
\draw (1.5,4)node{$F_{5}$};
\draw[->](1.5,3.8)--(1.5,2.6);
\draw (4,1.6)node{$F_{3}$};
\draw[->](3.8,1.6)--(2.3,1.6);
\draw (-.7,.2)node{$F_{2}$};
\draw[->](-.5,.3)--(1.3,1.5);
\draw (-1.5,1.6)node{$F_{1}$};
\draw[-](-1.3,1.6)--(0,1.6);
\draw[dashed,->](0,1.6)--(.5,1.6);
\draw (1.5,-1)node{$F_{6}$};
\draw[-](1.5,-.8)--(1.5,0);
\draw[dashed,->](1.5,0)--(1.5,.5);
\draw (2.8,3.5)node{$F_{4}$};
\draw[-](2.8,3.4)--(2.3,2.8);
\draw[dashed,->](2.3,2.8)--(2,2.4);
\end{tikzpicture}$$
        \end{minipage}

If $F$ is a codimcension one face of the cube (a face of the cube) we let $v_F$ denote the dual vertex in $\mathcal{K}_{P^3}$. Recall, the codimension one faces of the cube dualize to vertices, so each face of the cube dualizes to a vertex $v_{F_i}$.  Each codimension two face in the cube is the intersection of two faces (e.g., $F_1 \bigcap F_6 = E$ where $E$ is an edge in $P$) dualizes to an edge.  In this very specific example $F_1 \bigcap F_6$ dualizes to the edge $\{v_{F_1}, v_{F_6}\}$.  Finally, a vertex in the cube is a codimension three face since it is the intersection of three facets.  Here, the vertex $v_1$ in the cube is the intersection:  $F_2 \bigcap F_3\bigcap F_5$ so it dualizes to $\{v_{F_2}, v_{F_3}, v_{F_5}\} $  The table below lists the simplicies in the dual two-dimensional simplicial complex $\mathcal{K}_{P^3}$.

\begin{center}
\begin{tabular}{|c|c|c|}
\hline
$0$-simplices&$1$-simplices&$2$-simplices\\ \hline
$v_{F_1}$ & $\{v_{F_1}, v_{F_6}\}$   &  $\{v_{F_2}, v_{F_3}, v_{F_5}\}$ \\
$v_{F_2}$ &  $\{v_{F_2}, v_{F_6}\}$  &  $\{v_{F_1}, v_{F_2}, v_{F_5}\}$             \\
$v_{F_3}$ &  $\{v_{F_3}, v_{F_6}\}$  &  $\{v_{F_1}, v_{F_4}, v_{F_5}\}$           \\
$v_{F_4}$ &   $\{v_{F_4}, v_{F_6}\}$ &  $\{v_{F_3}, v_{F_4}, v_{F_5}\}$            \\
$v_{F_5}$ &   $\{v_{F_1}, v_{F_2}\}$ &   $\{v_{F_2}, v_{F_3}, v_{F_6}\}$          \\
$v_{F_6}$ & $\{v_{F_2}, v_{F_3}\}$   &  $\{v_{F_1}, v_{F_2}, v_{F_6}\}$              \\
$v_{F_7}$ & $\{v_{F_3}, v_{F_4}\}$   &  $\{v_{F_1}, v_{F_4}, v_{F_6}\}$               \\
$v_{F_8}$ & $\{v_{F_1}, v_{F_4}\}$   &   $\{v_{F_3}, v_{F_4}, v_{F_6}\}$              \\
          & $\{v_{F_1}, v_{F_5}\}$   &                   \\
          & $\{v_{F_2}, v_{F_5}\}$   &                   \\
          &  $\{v_{F_3}, v_{F_5}\}$  &                   \\
          &  $\{v_{F_4}, v_{F_5}\}$  &                  \\
\hline
\end{tabular}
\end{center}

\vspace{.1in}
\end{Example}

In the case that a simplicial complex is a simplicial sphere, then the construction can made to generate a simple polytope.  Since the zero simplicies in $\mathcal{K}$ are dual to codimension one faces in the dual polytope, the face structure of the polytope is given by working through higher dimensional simplicies of $K$ and carefully fitting together the faces of the polytope using the definition of the simplicial complex along with the process to dualize.

If $K$ is a general simplicial complex then it is possible to construct a simple polyhedral complex \cite{DJ} pg. 428. This involves the notion of a cone on the geometric realization of various posets.

Later we will consider $\mathcal{K} = C^j(R_G)$ (for some positive integer $j$) and then analyze certain inequalities derived from the $g$-theorem.  To do so requires the analysis of the \textit{$h$-vector} (denoted $\overrightarrow{h}$) of the dual and how it is related to the $\overrightarrow{f}$ of the simplicial complex we construct.  Following \cite{DJ} pg. 430 we have:


\begin{Definition}
Let $(f_0,\ldots,f_{n-1})$ be the $f$-vector of an $(n-1)$-dimensional simplicial complex $\mathcal{K}$. Then the $h$-vector of the dual, $(h_0, h_1,\ldots,h_n)$, is defined by the following equation:

$$h_0t^n + \cdots+ h_{n-1}t + h_n  = (t-1)^n + f_0(t-1)^{n-1} + \cdots + f_{n-1}$$

\end{Definition}

Observe that the number of components in the $h$-vector is the dimension of $\mathcal{K}$ plus one.  In the case that $K$ (simplicial sphere) is dual to $P$ as above, then by \cite{DJ}, $f_i(K)$ is the number of codimension $i+1$ faces of $P$ (or $n-i-1$ faces). From \cite{BP1} pg. 12 there are the following two compact equations that relate specific components of the $\overrightarrow{h}$ and $\overrightarrow{f}$.  For $k = 0,...,n$
we have:\\

\begin{center}
$$h_k = \sum_{i = 0}^k (-1)^{k-i}\binom{n-i}{n-k}f_{i-1}$$
\end{center}

and
\begin{center}
$$f_{n-1-k} = \sum_{q = k}^n \binom{q}{k}h_{n-q}$$
\end{center}

\begin{Example}

Let $\mathcal{K}$ have the following $f$-vector  $(4,4)$.  Referring to Example \ref{Dual Square} it is a square.  The claim is that the dual $P$ has corresponding $h$-vector:  $(1,2,1)$.  Specifically, $h_0 =1 = h_2$ and $h_1 =2$.  We have the polynomial $h_0t^2 + h_1t+h_2 = (t-1)^2 + f_0(t-1) + f_1$.  Expanding and equating coefficients produces $h_0 =  1$, $h_1 = f_0 -2$ and $h_2 = f_1 - f_0 +1$ and we obtain the vector above.

\end{Example}

For certain types of simplicial complexes the $h$-vector of the dual exhibits a symmetry exhibited by Dehn-Sommerville relations.   From \cite{BP1}

\begin{Theorem}\label{Dehn}
The $h$-vector of any simple $n$-polytope is symmetric $h_{n-i} = h_i$ for $i = 0,1,\ldots,n$
\end{Theorem}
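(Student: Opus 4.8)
The plan is to recast the asserted symmetry as a single polynomial functional equation, and then verify that equation using the fact that $\mathcal{K}_P$, the dual $(n-1)$-dimensional simplicial sphere, has the property that every face-link is again a sphere. Write $\mathcal{K} = \mathcal{K}_P$ and introduce the face enumerator
$$F(x) = \sum_{i=0}^{n} f_{i-1}x^{i} = \sum_{\sigma \in \mathcal{K}} x^{|\sigma|},$$
with the convention $f_{-1}=1$ recorded in Definition \ref{fvector}. Reading the defining equation of the $h$-vector term by term (the summand of index $i$ being $f_{i-1}(t-1)^{n-i}$), one sees at once that the $h$-polynomial factors as $h(t) := \sum_{k=0}^{n} h_k t^{n-k} = (t-1)^{n} F\!\left(\tfrac{1}{t-1}\right)$. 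The relations $h_i = h_{n-i}$ hold for all $i$ precisely when the coefficient sequence of $h(t)$ is palindromic, i.e. when $t^{n} h(1/t) = h(t)$.

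First I would carry out the substitution. Expanding and simplifying gives $t^{n}h(1/t) = (1-t)^{n}F\!\left(\tfrac{t}{1-t}\right)$, while $h(t) = (t-1)^{n}F\!\left(\tfrac{1}{t-1}\right)$. Setting $y = \tfrac{1}{t-1}$ one computes $\tfrac{t}{1-t} = -1-y$, so after cancelling the common factor $(t-1)^n$ and the sign $(-1)^n$ the desired identity $t^n h(1/t)=h(t)$ becomes equivalent to the purely algebraic functional equation
$$F(-1-x) = (-1)^{n} F(x).$$
Thus the entire theorem reduces to proving this one identity for the face enumerator of a simplicial $(n-1)$-sphere.

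To establish the functional equation I would expand $(-1-x)^{|\sigma|}=(-1)^{|\sigma|}\sum_{j}\binom{|\sigma|}{j}x^{j}$ and regroup the resulting double sum by the $j$-element subsets $\tau\subseteq\sigma$, which amounts to organizing faces according to their links. This yields
$$F(-1-x) = \sum_{j=0}^{n} x^{j} \sum_{\,|\tau|=j} \Bigl( \sum_{\sigma \supseteq \tau} (-1)^{|\sigma|} \Bigr),$$
and for a fixed face $\tau$ the cofaces $\sigma \supseteq \tau$ are in bijection with faces of the link $\mathrm{lk}(\tau)$, so the innermost sum equals $(-1)^{|\tau|+1}\widetilde{\chi}(\mathrm{lk}(\tau))$, where $\widetilde{\chi}$ is the reduced Euler characteristic. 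Here is where the topological input enters: because $\mathcal{K}$ is a simplicial $(n-1)$-sphere, the link of each face $\tau$ with $|\tau|=j$ is a sphere of dimension $n-j-1$ (the case $\tau=\varnothing$ giving $\mathrm{lk}(\varnothing)=\mathcal{K}$ itself), hence $\widetilde{\chi}(\mathrm{lk}(\tau))=(-1)^{\,n-j-1}$. The inner sum then collapses to $(-1)^{j+1}(-1)^{\,n-j-1}=(-1)^{n}$, independent of $\tau$, and summing over the $f_{j-1}$ faces of size $j$ gives exactly $F(-1-x)=(-1)^{n}\sum_{j} f_{j-1}x^{j}=(-1)^{n}F(x)$, as required.

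I expect the main obstacle to be this topological input rather than the algebra: the whole argument hinges on knowing that the dual of a simple $n$-polytope is a genuine simplicial $(n-1)$-sphere all of whose face-links are spheres of the correct dimension. For the specific duals of $\Delta^{n}$ that drive the paper this is elementary and can be checked by hand, but in the stated generality it rests on standard polytope theory (for instance the shellability of polytope boundaries, cf. \cite{B,BP1}), which I would invoke as a black box. A secondary point requiring care is the bookkeeping at the ends of the index range—the empty face $\tau=\varnothing$ together with the convention $f_{-1}=1$—so that the constant term of $F$ and the top-dimensional link are handled on the same footing as the interior terms.
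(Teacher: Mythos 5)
Your proof is correct, but it is worth noting that the paper itself never proves Theorem \ref{Dehn}: the statement is simply quoted from \cite{BP1}, so any honest proof is "different from the paper's." Your argument is the classical Eulerian-complex proof of the Dehn--Sommerville relations: you correctly reduce palindromicity of the $h$-vector to the functional equation $F(-1-x)=(-1)^n F(x)$ for the face enumerator (your algebra here checks out, including the substitution $y=\tfrac{1}{t-1}$, $\tfrac{t}{1-t}=-1-y$), and you verify that equation by regrouping $\sum_{\sigma}(-1)^{|\sigma|}(1+x)^{|\sigma|}$ over pairs $\tau\subseteq\sigma$ and evaluating $\sum_{\sigma\supseteq\tau}(-1)^{|\sigma|}=(-1)^{|\tau|+1}\widetilde{\chi}(\mathrm{lk}(\tau))$; the boundary cases $\tau=\varnothing$ (link equal to $\mathcal{K}$ itself) and $|\tau|=n$ (link the void complex, $\widetilde{\chi}=-1=(-1)^{n-n-1}$ under the convention $S^{-1}=\varnothing$) are handled consistently by your conventions. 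By contrast, the proof in \cite{BP1} that the paper leans on is the Morse-style argument on the polytope itself: a generic linear functional orients the edge graph of the simple polytope $P$, $h_k$ counts vertices of index $k$, and replacing the functional by its negative exchanges $h_k$ with $h_{n-k}$. The trade-off is instructive. The index-counting proof is tied to the geometry of $P$ and is essentially free of topological input, whereas your proof needs the external fact that face links in the boundary sphere of a simplicial polytope are spheres of complementary dimension (only their Euler characteristics are actually used, so homology spheres suffice); in exchange, your argument proves strictly more, namely that Dehn--Sommerville holds for every Eulerian complex -- in particular for all simplicial spheres, polytopal or not -- which is the natural level of generality for the symmetry and is not reachable by the linear-functional argument.
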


\begin{Example}
For a positive integer $n > 1$ let  $P = \Delta^n$ then $h_i(P) = 1$ for $i =0,\ldots,n$.  This follows from \cite{BP1} along with a simple verification using Theorem \ref{Dehn}. For each $i < n-1$, the $h$-vectors of the proper faces $ \Delta^i$ and $ \Delta^{i+1}$ have components all consisting of ones and the $h$-vector of $ \Delta^{i+1}$ has one more component then the $h$-vector of $ \Delta^i$.  More specifically, consider $\Delta^2$; it has $h$-vector $(1,1,1)$.  Now $\Delta^1$ is a face and has $h$-vector $(1,1)$.  We state the following interesting fact from \cite{DJ} and observe that for the examples listed here, the following equation holds.

$$h_0 + \cdots+ h_n = |V(P)|$$

\end{Example}

Recall, $R_G$ is a random graph generated using the methods described in \cite{ALDH} and the simplicial complex $\mathcal{K} = C^j(R_G)$ is the $j^{th}$-cone on $R_G$. A fundamental question we seek to answer is the following; given $\mathcal{K} = C^j(R_G)$ and $\overrightarrow{f}(\mathcal{K})$ what type of symmetry does this $f$-vector exhibit? Another interesting formulation of this question is to determine if the dual polytope is simple or not.  Fortunately, the celebrated \textit{$g$-theorem} provides an answer for a fairly general class of polyhedra. Using \cite{B} we note that the $f$-vector of $P$ has components $f_i$ and they equal the number of $i$-faces of $P$.  So using this convention, if $P$ is simple and $n$-dimensional, then the $f$-vector is $(f_0,...,f_{n-1})$ where $f_0$ is the number of vertices of $P$ and $f_{n-1}$ is the number of facets of $P$.

The following material can be found in \cite{B} pgs. 130-131 and we list it here for the convenience of the reader.  Let $g_i(f) := \sum_{j=0}^d (-1)^{i+j}\binom{j}{i}f_j$ for $i = 0,..,d$.  Let $m: = \lfloor \frac{d-1}{2}\rfloor$ and $n: = \lfloor \frac{d}{2} \rfloor$.  The following is referred to as McMullen's Conditions \cite{B}, but other authors refer to it as the $g$-theorem \cite{BP1}.  For additional information regarding condition $(3)$ the interested reader can refer to \cite{B} pg. 130 (specifically, (2)-(4)).

\begin{Theorem}\label{MM condition}
A $d$-tuple $(f_0,...,f_{d-1})$ of positive integers is the $f$-vector of a simple $d$-polytope if and only if the following conditions hold:

\begin{center}
\begin{enumerate}
  \item $g_i(f) = g_{d-i}(f)$ for $i = 0,..,m$.
  \vspace{.1in}
  \item $g_i(f) \leq g_{i+1}(f)$ for $i = 0,..,n-1$.
  \vspace{.1in}
  \item $g_{i+1}(f) - g_{i}(f) \leq (g_i(f) - g_{i-1}(f))^{\langle i\rangle}$ for $i = 1,...,n-1$.
\end{enumerate}
\end{center}

\end{Theorem}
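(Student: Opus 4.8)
The statement is the \emph{$g$-theorem} (historically McMullen's conjecture), here phrased for simple $d$-polytopes through the quantities $g_i(f)=\sum_{j=0}^d(-1)^{i+j}\binom{j}{i}f_j$. The first thing I would record is that these $g_i(f)$ are not auxiliary gadgets: a short computation (exactly the one carried out for $\Delta^n$, where each $g_i=1$) shows that $g_i(f)$ is precisely the $i$-th entry of the $h$-vector of the dual simplicial polytope, so conditions (1)--(3) are statements about an $h$-vector and its difference sequence. Because this is a celebrated and genuinely deep result, the honest ``proof'' in the present setting is the reference to \cite{B} pgs.\ 130--131; what I would supply instead is a sketch of the real argument, split along the biconditional into \textbf{necessity} (every simple polytope obeys (1)--(3)) and \textbf{sufficiency} (every admissible tuple is realized).

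For necessity I would handle the three conditions by their different sources. Condition (1), $g_i(f)=g_{d-i}(f)$, is the Dehn--Sommerville symmetry already stated in Theorem \ref{Dehn}; it follows from the Euler relations on the face lattice, equivalently from Poincaré duality on the toric variety attached to a rational simplicial polytope. Conditions (2) and (3) are Stanley's contribution: one associates to a rational simplicial $d$-polytope a projective toric variety whose even Betti numbers are the $h_i=g_i(f)$, and invokes the \emph{hard Lefschetz theorem} to produce a degree-one class whose powers act injectively below the middle degree. Injectivity forces the unimodality $g_0\le g_1\le\cdots\le g_{\lfloor d/2\rfloor}$, which is condition (2), while the quotient by a generic linear form is a standard graded algebra whose Hilbert function is the difference sequence $g_i(f)-g_{i-1}(f)$; Macaulay's theorem then says any such Hilbert function is an $M$-sequence, and that growth bound is exactly condition (3), written with the Macaulay pseudopower $(\,\cdot\,)^{\langle i\rangle}$.

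For sufficiency I would appeal to the Billera--Lee construction: starting from any integer vector whose difference sequence is an $M$-sequence, one builds an explicit shellable boundary complex realizing the prescribed $h$-vector, and then dualizes to obtain a simple polytope with the required $f$-vector, completing the ``only if $\Leftarrow$'' direction.

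The \textbf{main obstacle} is the necessity of (2)--(3), and within it the hard Lefschetz input: unimodality and the $M$-sequence property do not follow from elementary face counting and require either the algebraic geometry of toric varieties (for rational polytopes) or a combinatorial substitute (McMullen's weight/polytope algebra, or Karu's proof of hard Lefschetz for general, possibly non-rational, polytopes). I would emphasize that our downstream use of the theorem only ever invokes (1)--(3) as \emph{necessary} conditions — in order to certify, via the inequalities, that the complexes $C^{j}(R_G)$ fail to be dual to simple polytopes for large $j$ — so it is precisely this harder direction we rely on, and we take it, together with the realizability half, from \cite{B} and \cite{BP1}.
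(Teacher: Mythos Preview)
Your proposal is correct and, in fact, exceeds what the paper does: the paper offers no proof of this theorem at all, simply stating it as McMullen's Conditions with a citation to \cite{B} pgs.\ 130--131 (exactly as you anticipated). Your added sketch of the Stanley necessity argument via hard Lefschetz and the Billera--Lee sufficiency construction is accurate and more informative than the paper's bare citation; your observation that the downstream application in Proposition~\ref{No g Theorem} only requires the necessity direction is also correct.
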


Theorem \ref{MM condition} encodes information regarding upper and lower bounds on the number of faces of simple polytopes.  From \cite{B} pg. 132 one equation that can be derived is the following:

\begin{center}
$ f_0 = \sum_{i=0}^d \binom{i}{0}g_i(f)$
\end{center}

\vspace{.1in}
This can be written as $f_0 = (d-1)f_{d-1} - (d+1)(d-2)$ and to clarify that this equation refers to the polytope $P$ we will write

\begin{center}
$f_0(P) = (d-1)f_{d-1}(P) - (d+1)(d-2)$
\end{center}


\begin{Remark}
It is important to note that the generated random graph is fixed.  Therefore, the terms $f_0(R_G)$ and $f_1(R_G)$ are fixed, but the parameter $j$ can grow and this is the number of times the cone operation is performed.
\end{Remark}

To apply the $g$-theorem it is critical that we understand the dimension of the simplicial complex that results from applying the cone.  We recall that $C^j(R_G)$ is a $(j+1)$-dimensional simplicial complex when $f_1(R_G) \neq 0$.


\begin{Lemma}\label{f vector cone}
Let $\mathcal{K}$ be an $(n-1)$-dimensional simplicial complex with given $\overrightarrow{f}(\mathcal{K}) = (f_0,\ldots,f_{n-1})$.  For a positive integer $j > 0$ the following holds: $f_{n+j -1}(C^j(\mathcal{K})) = f_{n-1}(\mathcal{K})$\\
\end{Lemma}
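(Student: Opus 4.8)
The plan is to induct on $j$, concentrating all the content in the base case $j=1$, which isolates the effect of a single cone on the top-dimensional simplices, and then propagating the identity through the iteration. Throughout I would keep two things straight: the index $n-1$ denotes the \emph{dimension} of $\mathcal{K}$ (so its $f$-vector runs $f_0,\ldots,f_{n-1}$ and its maximal simplices have cardinality $n$), and the cone introduces a new vertex $w$ lying outside the vertex set of $\mathcal{K}$.

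For the base case I would first invoke the fact recorded immediately after Definition \ref{Cone}, that coning raises dimension by exactly one, so $C(\mathcal{K})$ is $n$-dimensional and its maximal simplices have dimension $n$, i.e.\ cardinality $n+1$. I would then argue that a simplex of $C(\mathcal{K}) = \{x \cup \{w\} : x \in \mathcal{K}\} \cup \mathcal{K}$ has cardinality $n+1$ if and only if it has the form $x \cup \{w\}$ with $x$ a maximal (that is, $(n-1)$-dimensional, cardinality $n$) simplex of $\mathcal{K}$: the simplices inherited directly from $\mathcal{K}$ have cardinality at most $n$, and since $w \notin x$ for every $x \in \mathcal{K}$, a coned simplex $x \cup \{w\}$ attains cardinality $n+1$ exactly when $|x| = n$. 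Because the assignment $x \mapsto x \cup \{w\}$ is injective, it is a bijection between the maximal simplices of $\mathcal{K}$ and those of $C(\mathcal{K})$, yielding $f_n(C(\mathcal{K})) = f_{n-1}(\mathcal{K})$.

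For the inductive step I would write $C^{j+1}(\mathcal{K}) = C(C^j(\mathcal{K}))$ and note that $C^j(\mathcal{K})$ is $(n+j-1)$-dimensional. Applying the base case verbatim, with $C^j(\mathcal{K})$ playing the role of $\mathcal{K}$, gives $f_{n+j}(C^{j+1}(\mathcal{K})) = f_{n+j-1}(C^j(\mathcal{K}))$, and the induction hypothesis identifies the right-hand side with $f_{n-1}(\mathcal{K})$, which is precisely the claim for $j+1$.

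The only genuine difficulty is bookkeeping: one must track the dimension count $\dim C^j(\mathcal{K}) = (n-1) + j$ consistently so that the top index $n+j-1$ lines up with the cardinality $n+j$ of a maximal simplex, and avoid conflating the appearance of $n$ as a dimension in the statement with its appearance as a vertex label in Definition \ref{Cone}. Once the single-cone bijection is stated cleanly, no computation remains; the result is a pure counting identity carried through the iteration.
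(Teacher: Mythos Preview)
Your proof is correct and follows essentially the same approach as the paper: both identify the top-dimensional simplices of the iterated cone as exactly the top-dimensional simplices of $\mathcal{K}$ with the successive cone points adjoined. The only cosmetic difference is that you package the argument as an induction on $j$ with an explicit bijection $x\mapsto x\cup\{w\}$ in the base case, whereas the paper describes the $j$-fold structure directly in one pass; the mathematical content is identical.
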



\begin{proof}
Let $\sigma_{max} = \{v_1,\ldots,v_n\}$ be a top dimensional simplex in $\mathcal{K}$ noting that $dim(\mathcal{K}) = |  \sigma_{max} | -1$.    We observe that $C(\mathcal{K})$ has the effect of adding one vertex to each simplex, but in particular for a $\sigma_{max}^{'} \in C(\mathcal{K})$ we have $\sigma_{max}^{'} = \sigma_{max} \cup \{c_1\}$ where $c_1$ refers to the new vertex added to the simplex, the cone point, and $\sigma_{max}$ is a simplex counted in $f_{n-1}(\mathcal{K})$.  Clearly, $| \sigma_{max}^{'}|  = |\sigma_{max}| + 1 = n+1$ and there are $f_{n-1}$ such simplicies.  Furthermore, it is obvious that the dimension of $C(\mathcal{K})$ is $n$.  Similarly, for $j >1$ we have $dim(C^j(\mathcal{K})) = dim(\mathcal{K}) + j = n + j -1$.  Each top dimensional simplex in the iterated cone $C^j(\mathcal{K})$ are those that are the top dimensional simplicies in $\mathcal{K}$ with $j$ vertices added coming from adjoining $j$ ``cone" points.  These maximal dimensional simplicies have $n+j$ vertices and so the top component of $\overrightarrow{f}(C^j(\mathcal{K}))$ is $f_{n+j-1}$ and such simplicies are enumerated by $f_{n-1}(\mathcal{K})$.
\end{proof}

\begin{Remark}
 Of particular interest is the case $\mathcal{K} = R_G$ such that $f_1(R_G) \neq 0$.  The equation above, then takes the form:$f_{j+1}(C^j(R_G)) = f_1(R_G)$ noting that $f_{j+1}$ is the component of the $f$-vector enumerating the simplicies of maximal cardinality.
\end{Remark}


\begin{Proposition}\label{No g Theorem}
For a graph $G$ such that $f_1(G) \neq 0$, let $C^{j}(G) = \mathcal{K}^{j+1}$. For $j$ sufficiently large, the complexes $\mathcal{K}^{j+1}$ are not dual to a polytope that satisfies the $g$-theorem.
\end{Proposition}

\begin{proof}

Given a fixed graph $G$ let $f_0(G) = s$ and $f_1(G) = t$.  We argue by contradiction.  Suppose for all $j$ we have $\mathcal{K}^{j+1}$ is dual to a $(j+2)$-dimensional polytope $P$ that satisfies the $g$-theorem.  Then the following equation must hold where $d = dim(P)$:

\begin{center}
$f_0(P) = (d-1)f_{d-1}(P) - (d+1)(d-2)$
\end{center}

Since $f_{j+1}$ is the number of codimension one faces of $P$ we obtain, using the dual complex, $f_{j+1}(P) = s+j$.  Plugging all of this into the equation above
gives:

\begin{center}
$\displaystyle \frac{t-s}{s-2} = j$
\end{center}

To obtain the result observe that the left hand side of the equation is fixed and $j$ can be arbitrarily large.

\end{proof}

Proposition \ref{No g Theorem} also implies that one can pass through the class of simple $n$ polytopes. As a practical matter this is crucial in ensuring that the $f$-vectors do not exhibit a symmetry as described by the statements that follow, in particular Definition \ref{symmetrical}.  One goal is to maintain the computational complexity of the resulting $f$-vector \cite{KP}.



Let $P = \Delta^n$ and $\mathcal{K}_P = \mathcal{K}$ be its dual. It is shown that when $\overrightarrow{h}(P) = (1,1,\ldots,1)$, then $\overrightarrow{f}(\mathcal{K})$ satisfies a Dehn-Sommerville type relationship.

Let us recall Pascal's Triangle:

\large
$$
\begin{tikzpicture}
\foreach \n in {0,...,4} {
  \foreach \k in {0,...,\n} {
    \node at (\k-\n/2,-\n) {${\n \choose \k}$};
  }
}
\end{tikzpicture}
$$
\normalsize

We note that the triangle starts from row zero and the entry $\binom{n}{k}$ refers to row $n$ column $k$. The $n^{th}$ row
in the triangle is

$$\bigg \{ \binom{n}{r} \; \bigg | \;  0 \leq r \leq n \bigg \}$$

There is a zeroth row and it contains $\binom{0}{0}$ and for each row there are columns starting at zero.  For example, row two contains: $\binom{2}{0}$, $\binom{2}{1}$ and $\binom{2}{2}$.  Using this notational convention, there are three columns whereby $\binom{2}{0}$ is in column zero for this row.  There are diagonals too and they begin with the zeroth diagonal consisting of $\binom{y}{0}$ for positive integers $y \geq 0$.

For the convenience of the reader we recall the following.  For $k = 0,\ldots,n$
we have:\\

$$h_k = \sum_{i = 0}^k (-1)^{k-i}\binom{n-i}{n-k}f_{i-1}$$

\vspace{.1in}
$$f_{n-1-k} = \sum_{q = k}^n \binom{q}{k}h_{n-q}$$

\begin{Definition}\label{symmetrical}
An $n-1$ dimensional simplicial complex $\mathcal{K}$ is called symmetrical if $f_j(\mathcal{K}) = f_{n-j-1}(\mathcal{K})$ for $0 \leq j \leq n-1$.
\end{Definition}

\begin{Theorem} \label{N simplex}
Let $n >1$ and suppose $P$ is an $n$-dimensional simple convex polytope such that $\overrightarrow{h}(P) = (1,\ldots,1)$, then the $n-1$ dimensional dual simplicial complex $\mathcal{K}$ is symmetrical.  
\end{Theorem}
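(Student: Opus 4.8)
The plan is to reduce the claimed symmetry of $\overrightarrow{f}(\mathcal{K})$ to a pair of elementary binomial identities by feeding the hypothesis $\overrightarrow{h}(P) = (1,\ldots,1)$ through the conversion formula recalled just above Definition \ref{symmetrical}. Since $\mathcal{K}$ is the dual of the simple polytope $P$, the $f$-vector of $\mathcal{K}$ and the $h$-vector of $P$ are linked by
$$f_{n-1-k} = \sum_{q=k}^n \binom{q}{k} h_{n-q}, \qquad k = 0,\ldots,n.$$
Substituting $h_{n-q} = 1$ for every $q$ collapses the right-hand side to the pure binomial sum $\sum_{q=k}^n \binom{q}{k}$, at which point the problem is entirely combinatorial and the geometry of $P$ no longer enters.

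First I would evaluate this sum using the hockey-stick identity $\sum_{q=k}^n \binom{q}{k} = \binom{n+1}{k+1}$, which is exactly the statement that summing along a diagonal of Pascal's Triangle produces the next entry one row down (and can be proved by a one-line induction on $n$ via Pascal's rule $\binom{q}{k} + \binom{q}{k+1} = \binom{q+1}{k+1}$). This yields the closed form $f_{n-1-k} = \binom{n+1}{k+1}$, or equivalently, after the reindexing $j = n-1-k$, the formula $f_j(\mathcal{K}) = \binom{n+1}{n-j}$ for $0 \le j \le n-1$. I would then read off $f_{n-j-1}(\mathcal{K}) = \binom{n+1}{j+1}$ from the same formula and invoke the reflection symmetry of binomial coefficients, $\binom{n+1}{n-j} = \binom{n+1}{j+1}$, which holds because $(n-j) + (j+1) = n+1$. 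Comparing the two expressions gives $f_j(\mathcal{K}) = f_{n-j-1}(\mathcal{K})$, which is precisely the symmetry of Definition \ref{symmetrical}.

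The argument has no serious obstacle; the only genuine work is bookkeeping. The main point of care will be keeping the two index conventions straight—the $h$-vector is indexed $0,\ldots,n$ while the $f$-vector of the $(n-1)$-dimensional complex is indexed $0,\ldots,n-1$—so that the substitution $j = n-1-k$ and the final reflection land on the correct endpoints. It is worth recording along the way that the resulting components $f_j(\mathcal{K}) = \binom{n+1}{j+1}$ are exactly entries of Pascal's Triangle; this is what connects the theorem to the testing discussion in \S\ref{bitconversions} and \S\ref{results}, since these are precisely the numbers whose bit encodings the NIST suite declares random.
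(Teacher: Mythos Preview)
Your proposal is correct and follows essentially the same route as the paper's proof: both substitute $h_i=1$ into the conversion formula $f_{n-1-k}=\sum_{q=k}^n\binom{q}{k}h_{n-q}$, apply the hockey-stick identity to obtain $f_{n-1-k}=\binom{n+1}{k+1}$, and then finish with the reflection symmetry $\binom{n+1}{j+1}=\binom{n+1}{n-j}$. Your write-up is in fact slightly cleaner than the paper's, which pauses to check the extreme cases $k=0,n-1,n$ separately before invoking the diagonal sum.
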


\begin{proof}
We assume $n$ is fixed.  Since $h_i = 1$ for each $i$, then for $0 \leq k \leq n$ the components of the
$f$-vector of $\mathcal{K}$ can be written as:

$$f_{n-1-k} = \sum_{q = k}^n \binom{q}{k}$$\\

We first deal with the extreme cases for $k$. If $k = n$, then it is clear that $f_{-1} = 1$.  For $k = 0$, then

$$f_{n-1} = \sum_{q = 0}^n \binom{q}{0} = n+1$$\\

By similar calculations we obtain for $k = n-1$:

$$f_0 = \binom{n-1}{n-1} + \binom{n}{n-1}$$\\

Hence, $f_0 = f_{n-1} = n+1$.  In what follows we will use certain summations in Pascal's triangle.  Generally, consider a set $\{0,\ldots,t\}$ of consecutive positive integers.  Fix $s \in \{0,\ldots,t\}$ such that $s \leq t $ then using the $f$-vector calculation above we have the summation:

$$f_{t-1-s} = \binom{s}{s} + \binom{s+1}{s} +\binom{s+2}{s} + \cdots + \binom{t}{s}$$\\

This is summation along a diagonal in Pascal's Triangle, therefore

$$f_{t-1-s} = \binom{t+1}{s+1}$$\\

Consider $f_{n-1-k}$ and the two substitutions for $k$ (observe that $t = n$ and $s = k$ in the formulation above):\\
\begin{enumerate}
  \item When $ k = j$, then $f_{n-1-j} =\displaystyle \binom{n+1}{j+1}$.\\
  \item When $ k= n-j-1$, then $f_{n-1-j} = \displaystyle\binom{n+1}{n-j}$.\\
\end{enumerate}

To complete the proof one must show that $\binom{n+1}{j+1} = \binom{n+1}{n-j}$, but this follows immediately from the properties of $n$ choose $k$.

\end{proof}

We note that the $\overrightarrow{h}$ has a component $h_n$ but the entries of the dual $(n-1)$ dimensional complex are located in the $n+1$ row in Pascal's Triangle.  The following statement follows immediately from Theorem \ref{N simplex} and the tests found in sections \S\ref{results} and \S\ref{appendix}. 

\begin{Remark}
There are families of bit sequences that are derived from symmetrical simplicial complexes and their successive cones that are determined to be random using the NIST test suite described in publication 800-22~\cite{NIST}.
The key takeaway is that one starts with a combinatorial object with a clear pattern (symmetry) and dualizing it produces a simplicial complex that is highly symmetrical, yet the converted bit sequence shows up as random using the NIST test suite. More is true, using the cone it is possible to construct arbitrarily many bit sequences (derived from this symmetrical object) that the test suite deems random.
\end{Remark}

\section{Bit Conversions} \label{bitconversions}

The experimental implementation uses arbitrary-sized integers to represent vector components, then this vector is converted into a sequence of integers.
Since the NIST test suite \cite{NIST} expects its input to be a binary file, we must convert a sequence of integers into the corresponding binary bit-stream.
To do this we, iterate over the vector components to generate a binary representation of each integer.  We then concatenate this binary representation onto the output file.  Care must be taken; it is important to use a bit-wise approach instead of a byte-wise approach.  In a byte-wise approach, the binary representation of each integer will begin on a byte boundary, and since most integers will not completely fill their most-significant byte, there will be up to 7 extraneous leading zero bits.  In practice, this means that the binary representations of each integer should be bit-shifted before concatenation, to avoid these leading zero bits.

To illustrate the importance of this point, our initial implementation in~\cite{ALDH} has a minor bug in the bit-shifting code that is only triggered for certain byte values, and when triggered drops a 0-bit from the output stream.
Despite the relative rarity of this bug being triggered, the NIST test suite did detect its non-random contributions to the output file, as can be seen in Tables 1, 3, 13, and 15 in~\cite{ALDH}, which show a noticeable ``clustering'' effect towards $p$-values of 0. In the current implementation used in this paper this bug has been fixed.

\section{Algorithms}\label{algorithms} 
All our algorithms implemented are polynomial time and numeric in nature, since we only calculate a vector and its dual.

%
\begin{center}
\textbf{Algorithm for cone operation on $f$-vector of a simplicial complex $\mathcal{K}$.}
\end{center}

\vspace{.1in}

\begin{algorithmic}
\State Input:  $\overrightarrow{f}(\mathcal{K})$ \textbf{as} $x_1,\dots,x_n$
\State Compute  $C^{k}(\mathcal{K})$ \textbf{as} $y_1,\dots,y_{n+1}$
\For{$i =1,\dots,n+1$}
\If{$i = 1$}
 \State $y_1 = x_1 + 1$
\ElsIf{$i = n+1$}
 \State $y_{n+1} = x_n$
\Else
 \State $y_i = x_i + x_{i+1}$
\EndIf
\EndFor
\end{algorithmic}

\vspace{.5in}

\begin{center}
\textbf{Algorithm determining the $f$-vector of a $k$-cone on a simplicial complex $\mathcal{K}$.}
\end{center}

\vspace{.1in}
\begin{algorithmic}
\State Input : Simplicial complex $\mathcal{K}$
\State Compute $\overrightarrow{f}(\mathcal{K})$
\For{$k=1,\dots,n$}
\State Compute $\overrightarrow{f}(C^k(\mathcal{K}))$
\EndFor
\end{algorithmic}

\vspace{.5in}

\begin{center}
\textbf{Algorithm to return the dual $h$-vector of the $f$-vector of $\mathcal{K}$ when $\mathcal{K}$ is dual to $P$.}
\end{center}

\vspace{.1in}

\begin{algorithmic}
\State Input: $\overrightarrow{f}(\mathcal{K})$
\State Compute $\overrightarrow{h}(P)$
\For{$k = 0,\dots,n$}
\For{$i = 0,\dots,k$}
\If{$i = 0$}
\State    $h_k = {n \choose n-k}$
\Else
\State    {$h_k = (-1)^{k-i} $ $n-i \choose n-k$ $ f_{i-1}$}
\EndIf
\EndFor
\EndFor
\end{algorithmic}

\vspace{.5in}

\begin{center}
\textbf{Algorithm to return the dual $f$-vector of the $h$-vector of $P$ when $P$ is dual to $\mathcal{K}$.}
\end{center}

\vspace{.1in}

\begin{algorithmic}
\State Input: $\overrightarrow{h}(P)$
\State Compute $\overrightarrow{f}(\mathcal{K})$
\For{$k = 0,\dots,n$}
\For{$i = 0,\dots,k$}
\If{$i = 0$}
\State    $h_k = {n \choose n-k}$
\Else
\State    {$h_k = (-1)^{k-i} $ $n-i \choose n-k$ $ f_{i-1}$}
\EndIf
\EndFor
\EndFor
\end{algorithmic}


\newpage
\section{Results} \label{results}



\sloppy
The algorithms described in the previous section allow us to perform a particular operation on a simplicial complex; hence changing its $f$-vector.  These vectors will increase in size incrementally and they can be computed since very explicitly, although this is a difficult problem in general.  Although it might be possible to start with a large arbitrary vector as our initial input, we have, by our construction a method to generate bit sequences using a combinatorial construction.  The resulting vectors are converted to a stream of bits, which we then use as input to the National Institute of Standards and Technology (NIST) statistical test suite. You can download the suite at the following url:  \url{https://csrc.nist.gov/Projects/Random-Bit-Generation/Documentation-and-Software}

The suite requires a bit-stream input from a random number generator that contains at least 10 megabits of input data. For each bit-stream, the NIST test suite will output a report that shows the $p$-values associated with each of the tests within the suite.

Recall, one theme of the paper is to demonstrate that the NIST test suite cannot detect certain non-random sequences that arise from combinatorial constructions. We conducted three experiments on certain vectors.  They are: the $h$-vector and variants of its length, the number of cones applied to the dual simplicial complex and the bits associated to the components of the $h$-vector. Our first experiment follows these steps:

\begin{enumerate}
\item Step 1:  Create an $h$-vector consisting of all ones coming from the standard $n$-simplex, $\Delta^n$.
\item Step 2:  Compute the $f$-vector of the dual $\mathcal{K}$ corresponding to $\Delta^n$ and its $h$-vector from the previous step.
\item Step 3:  Compute the iterated cone, $C^k(\mathcal{K})$, for $ 0 \leq k \leq q$ and for some $q$ and the resulting $f$-vector for experiment one listed above.
\item Run the resulting bit sequence (bit-stream) from Step 3 through the NIST test suite.
\end{enumerate}


A non-random pattern should be consistently detected by the NIST test suite and shown with every data set having a $p$-value close to 1.0 or 0.0.  This will  appear as a ``clustering'' of points towards the top or bottom of the graph represented as a flatline (See Figure~\ref{fig:random} and ~\ref{fig:nonrandom}). These sparklines represent the trend of all NIST tests for different variations of the input $h$-vectors.

\begin{figure}[!htbp]
\centering
   \subfloat[random]{\includegraphics[width=.4\textwidth]{./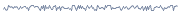}\label{fig:random}}
  \hfill
   \subfloat[non-random]{\includegraphics[width=.4\textwidth]{./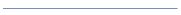}\label{fig:nonrandom}}
  \caption{Sparklines showing the difference between expected NIST results if bit-stream is identified as random (left) vs non-random (right).}
\end{figure}

The time required to run these tests on a modern consumer CPU (Intel Core i5-4590) was originally 2.3 ~\cite{ALDH}. However, performance of the code improved due to optimizations and algorithmic improvements. Updated and improved results from ~\cite{ALDH} can be found in the Appendix~\ref{second-appendix}.

For the first experiment, fix an $h$-vector of length 3751 consisting of all ones. So this is the $h$-vector of $\Delta^{3751}$.  Proceed to steps 2 and 3 and apply the cone to the dual simplicial complex then compute its $f$-vector.  Vary the number of coning operations from 0 to 99 times on the dual complex $\mathcal{K}$ for $0 \leq k \leq 99$ ( figure~\ref{fig:conings}) in Appendix~\ref{first-appendix}.  The sample scatter plots on Figures~\ref{fig:approxentropy21},~\ref{fig:linearcomplexity21} and~\ref{fig:rank21}, show the $p$-value distributions.  The graphs show that the NIST test suite is unable to detect non-randomness.   


\vspace{1cm}

\begin{figure}[h!]
\centering
\caption{Approximate Entropy NIST test $p$-values after applying between 0 - 99 coning operations on the dual of the standard 3751 simplex with h-vector consisting of all 1's }
\includegraphics[width=0.8\linewidth, height=0.5\linewidth]{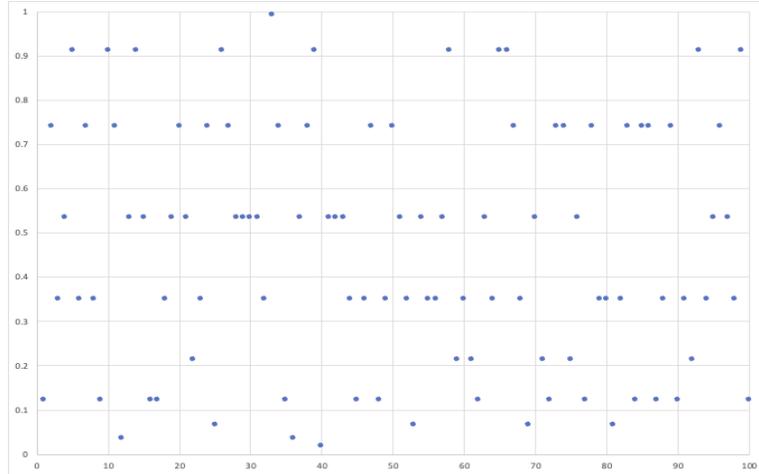}
\label{fig:approxentropy21}
\end{figure}

\begin{figure}[h!]
\centering
\caption{Linear Complexity NIST test $p$-values after applying between 0 - 99 coning operations on a vector of length 3751 with a pattern of all 1's}
\includegraphics[width=0.8\linewidth, height=0.5\linewidth]{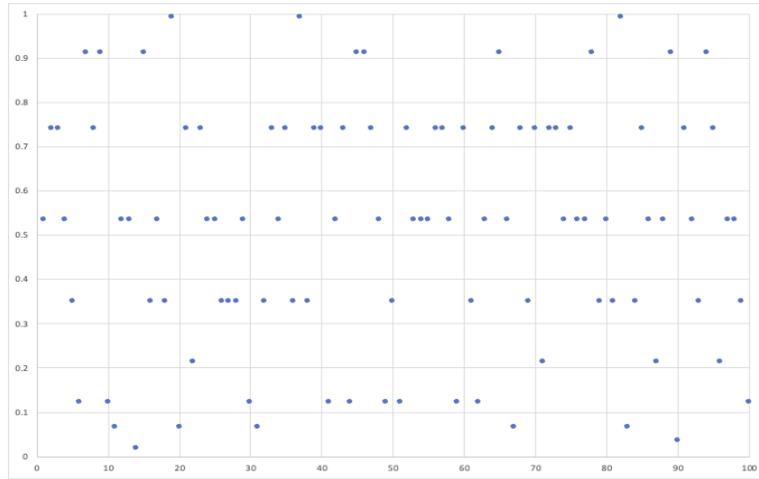}
\label{fig:linearcomplexity21}
\end{figure}

\begin{figure}[h!]
\centering
\caption{Rank NIST test $p$-values after applying between 0 - 99 coning operations on a vector of length 3751 with a pattern of all 1's}
\includegraphics[width=0.8\linewidth, height=0.5\linewidth]{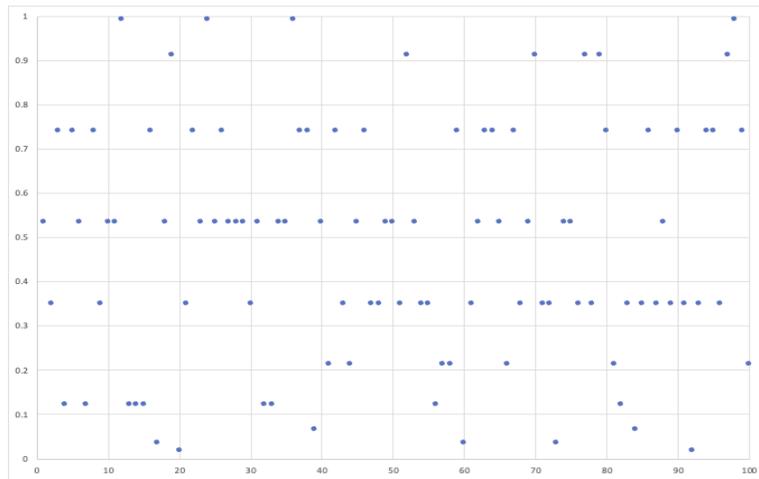}
\label{fig:rank21}
\end{figure}

\clearpage
\indent For the second experiment, we computed the $f$-vector dual to the $h$-vector without applying the cone to the dual (or iterated cone). We vary the $h$-vector length from 3750 to 3849. So, we have a collection of $h$-vectors all of whose components are ones corresponding to a set of standard simplicies:  $\{\Delta^i \;| \; 3750 \leq i \leq 3849\}$.  This should serve as a benchmark since there are no random processes involved that may result from the iterated coning operation. Furthermore, we expect the NIST suite to indicate that the bit-stream is non-random. However, the NIST test suite is unable to detect the non-randomness as illustrated in the sparklines in Figure~\ref{fig:lengths} in Appendix~\ref{first-appendix}.  We have also included sample scatter plots for the Approximate Entropy test (Figure~\ref{fig:approxentropy22}), the Linear Complexity test (Figure~\ref{fig:linearcomplexity22}) and the Rank test (Figure~\ref{fig:rank22}).  

\vspace{1cm}

\begin{figure}[h!]
\centering
\caption{Approximate Entropy NIST test $p$-values on a vectors of length between 3750 and 3849 with a pattern of all 1's, without applying coning operations}
\includegraphics[width=0.8\linewidth, height=0.5\linewidth]{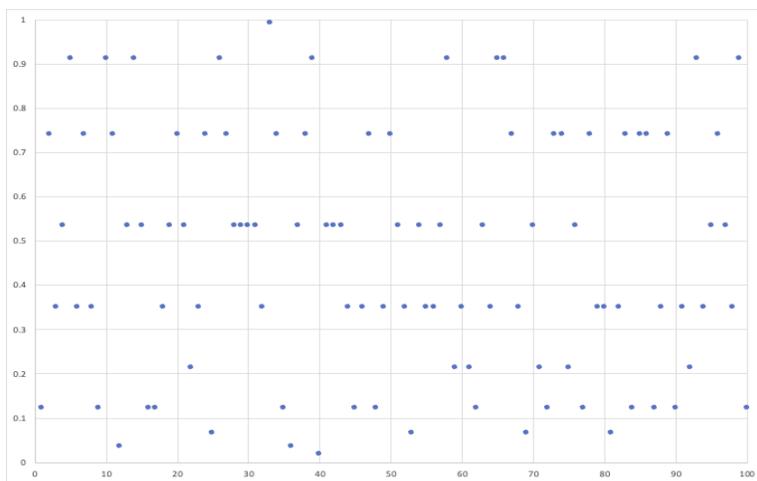}
\label{fig:approxentropy22}
\end{figure}

\begin{figure}[h!]
\centering
\caption{Linear Complexity NIST test $p$-values on a vectors of length between 3750 and 3849 with a pattern of all 1's, without applying coning operations}
\includegraphics[width=0.8\linewidth, height=0.5\linewidth]{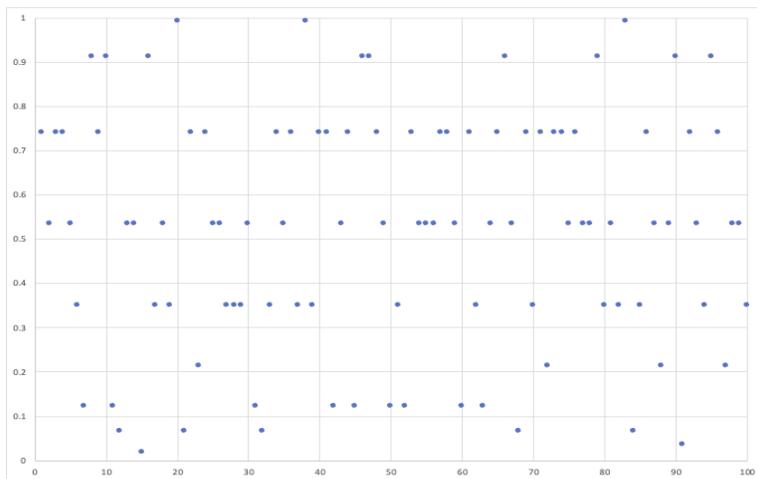}
\label{fig:linearcomplexity22}
\end{figure}

\begin{figure}[h!]
\centering
\caption{Rank NIST test $p$-values on a vectors of length between 3750 and 3849 with a pattern of all 1's, without applying coning operations}
\includegraphics[width=0.8\linewidth, height=0.5\linewidth]{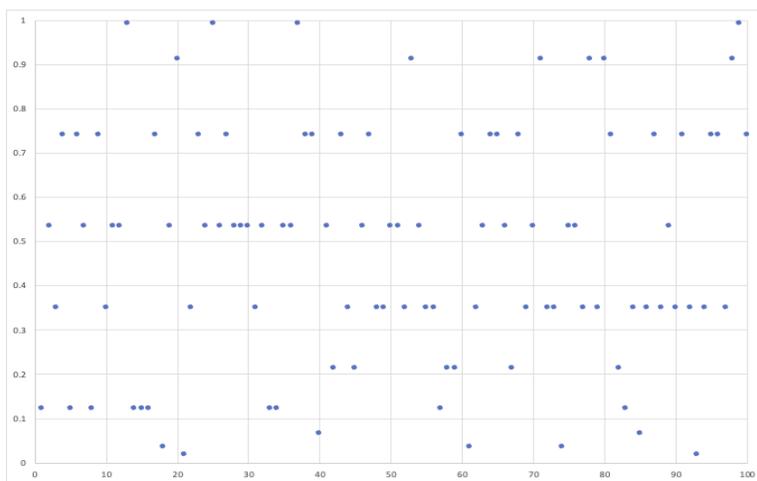}
\label{fig:rank22}
\end{figure}

\clearpage
\indent Motivated by the failure of the NIST test suite to detect non-random phenomena we conducted a third experiment. Some terminology is needed.  Given an $h$-vector $(h_0,...,h_n)$ define \textit{end elements} to be $h_0$ and $h_n$ whilst, \textit{non-end} elements are those $h_i$ such that $i \neq 0,n$.
For $\Delta^{3750}$ fix the corresponding $h$-vector.  Vary the non-end elements in the vector from 1 to 100. This must be done in a manner where the Dehn-Sommerville relations are preserved (see Theorem 4.9) as a necessary, but not sufficient condition for the vector to correspond to a polytope.  We observe that the construction is general and allows for components of the resulting vector to have non-end points to be any integer so long as the two end elements are 1, but for the purposes of testing, we restrict to integers between 1 and 100.  We are not claiming that the resulting vector is the $h$ of a polytope.  Indeed, other condition(s) would have to hold for this to be true.  From a testing perspective, this ad-hoc method allows for additional bit generation.  Moreover, for those vectors that correspond to polytopes, the construction allows for additional vectors to be created by dualizing and computing the $f$-vector.  Some examples of varied non-end elements can be found below:    

\begin{itemize}
\centering
\item[] 1 1 1 \ldots 1 1 1
\item[] 1 2 2 \ldots 2 2 1
\item[] 1 3 3 \ldots 3 3 1
\item[] 1 4 4 \ldots4 4 1
\item[] \ldots
\item[] 1 100 100 \ldots 100 100 1
\end{itemize}


We included the sparklines (see Figure~\ref{fig:patterns} in Appendix~\ref{first-appendix}) and sample scatter plots for the Approximate Entropy test (see Figure~\ref{fig:approxentropy23}), the Linear Complexity test (see Figure~\ref{fig:linearcomplexity23}) and the Rank test (see Figure~\ref{fig:rank23}).  

\vspace{1cm}

\begin{figure}[h!]
\centering
\caption{ApproximateEntropy NIST test $p$-values on vectors of length 3750 with non-end elements from 1 to 100.}
\includegraphics[width=0.8\linewidth, height=0.5\linewidth]{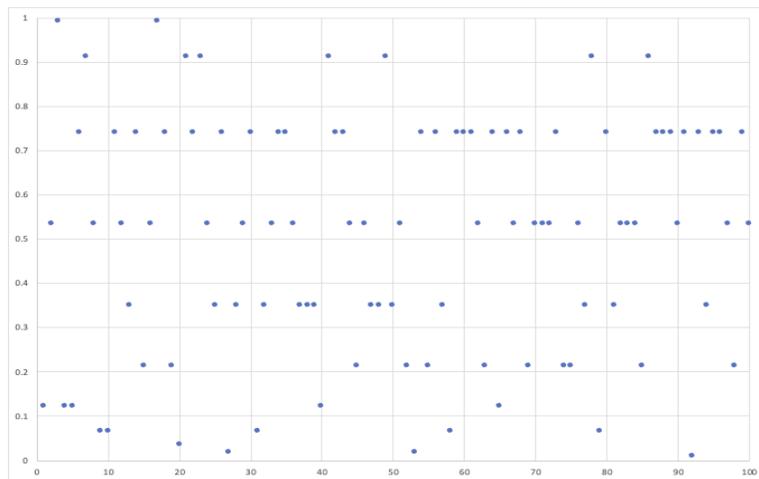}
\label{fig:approxentropy23}
\end{figure}

\begin{figure}[h!]
\centering
\caption{LinearComplexity NIST test $p$-values on vectors of length 3750 with non-end elements from 1 to 100.}
\includegraphics[width=0.8\linewidth, height=0.5\linewidth]{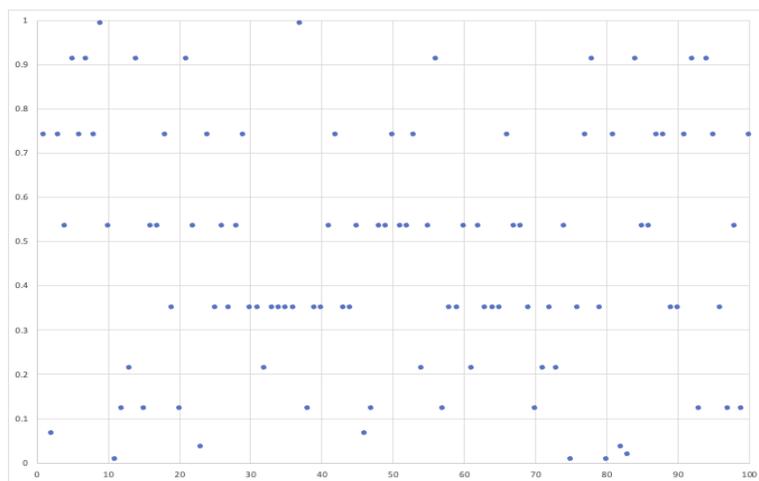}
\label{fig:linearcomplexity23}
\end{figure}

\begin{figure}[h!]
\centering
\caption{Rank NIST test $p$-values on vectors of length 3750 with non-end elements from 1 to 100.}
\includegraphics[width=0.8\linewidth, height=0.5\linewidth]{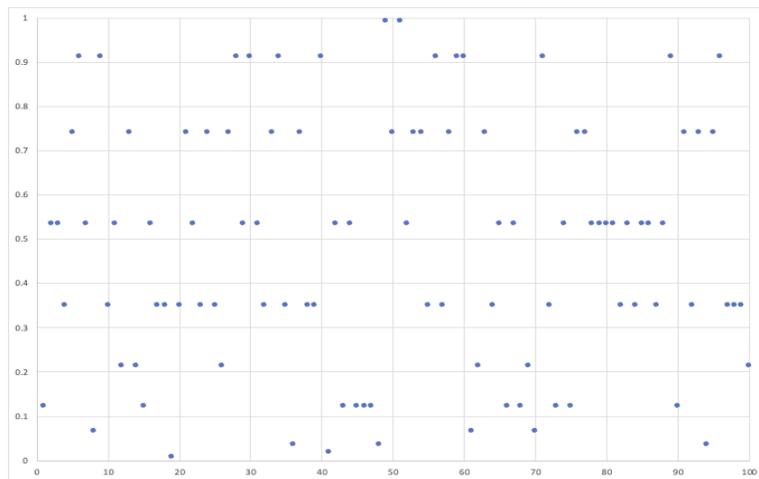}
\label{fig:rank23}
\end{figure}

\clearpage
\appendix

\section{Appendix A} \label{first-appendix}
This appendix includes the sparklines for all three experiments described in the Results section~\ref{results}. Each sparkline represents all p-values for all tests of the NIST suite for a particular bit-stream input.

\begin{figure}[h!]
\centering
\caption{100 trendlines for all NIST tests $p$-values after applying between 0 and 99 coning operations on a vector of length 3751, with a pattern of all 1's}
\includegraphics[width=0.2\linewidth, height=1.2\linewidth]{./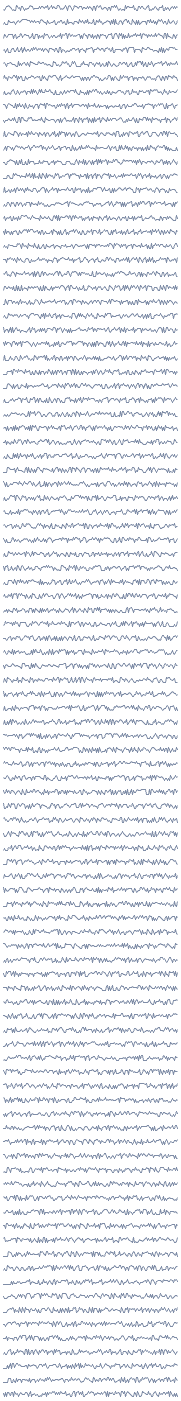}
\label{fig:conings}
\end{figure}

\begin{figure}[h!]
\centering
\caption{100 trendlines for all NIST tests $p$-values on vectors of lengths between 3750 and 3849 with a pattern of all 1's}
\includegraphics[width=0.2\linewidth, height=1.2\linewidth]{./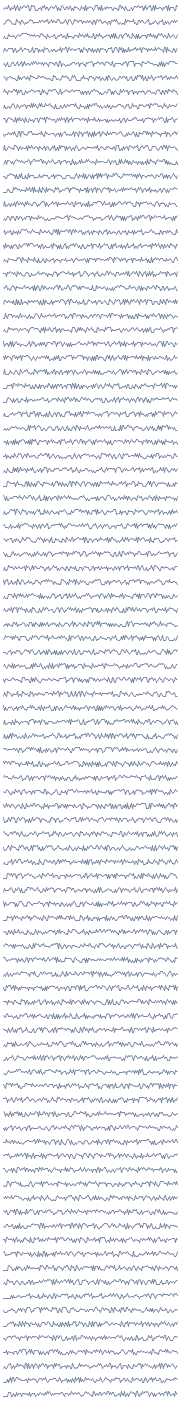}
\label{fig:lengths}
\end{figure}

\begin{figure}[h!]
\centering
\caption{100 trendlines for all NIST tests $p$-values on vectors of length 3750 varying non-end elements from 1 to 100.}
\includegraphics[width=0.2\linewidth, height=1.2\linewidth]{./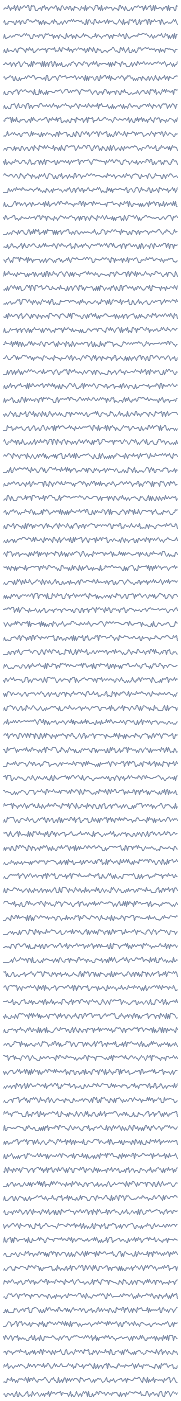}
\label{fig:patterns}
\end{figure}

\clearpage
\section{Appendix B}\label{second-appendix}

Since the appearance of ~\cite{ALDH} the code has been updated.  Version 2.0 of the code fixes several bugs, including one where 0's were not being properly appended to the bit-stream. As a result, fewer false positives appear in the output from the NIST test suite. The updated code was used to run all the tests in this paper and to re-run the experiments from~\cite{ALDH}. The interested reader can refer to the following GitHub repository for the code, the updated results and the results from the experiments conducted in this publication:

\vspace{.5cm}

\begin{center}
\url{https://github.com/gnexeng/coning-analysis}
\end{center}

\vspace{.5cm}

The following is a sample of the updated results from our first publication~\cite{ALDH}

%

\begin{figure}[h!] 
\begin{minipage}[t]{.5\linewidth}
\includegraphics[width=\linewidth]{./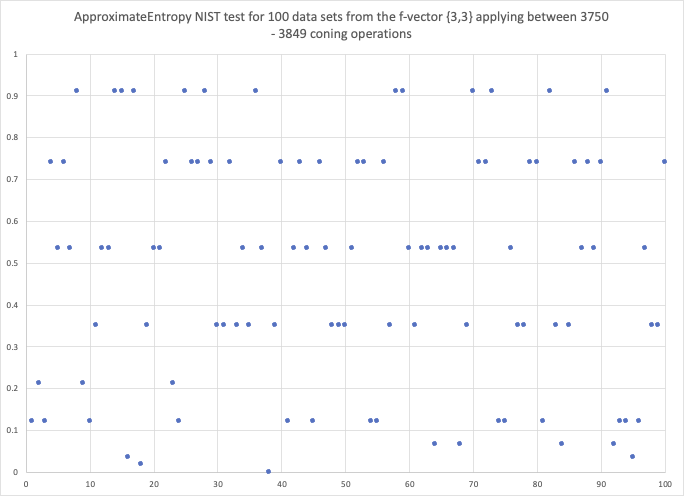}
\end{minipage}\hfill
\begin{minipage}[b]{.5\linewidth}
\includegraphics[width=\linewidth]{./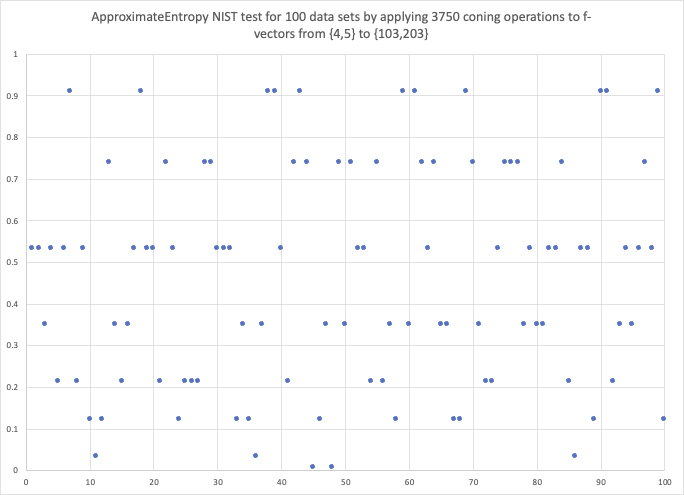}
\end{minipage}
\end{figure}

%

\begin{figure}[h!] 
\begin{minipage}[t]{.5\linewidth}
\includegraphics[width=\linewidth]{./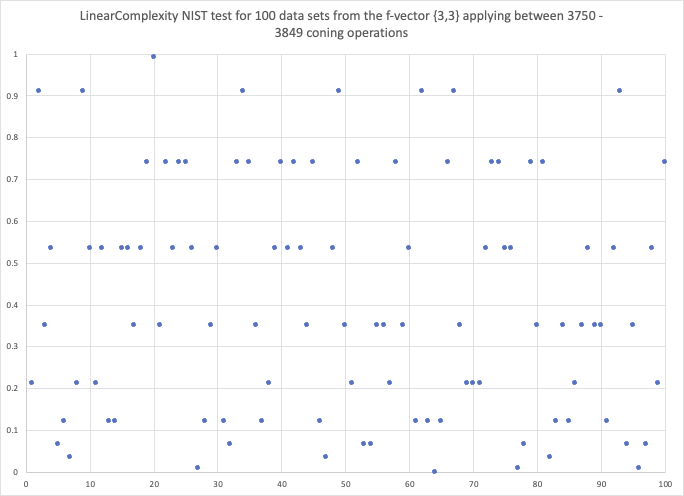}
\end{minipage}\hfill
\begin{minipage}[b]{.5\linewidth}
\includegraphics[width=\linewidth]{./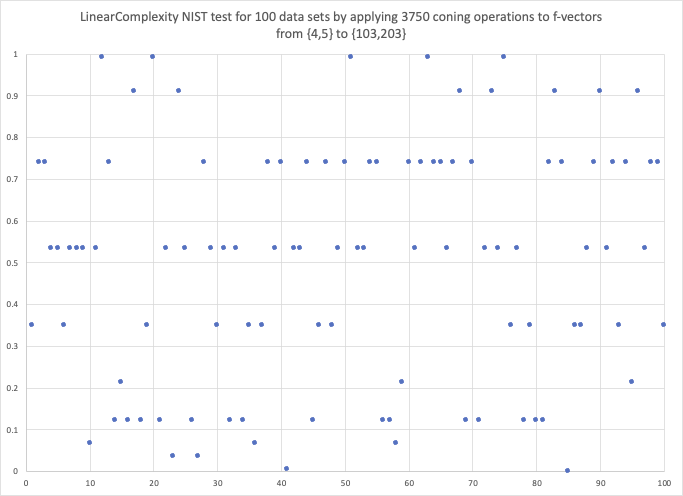}
\end{minipage}
\end{figure}

%
\newpage

\begin{figure}[h!] 
\begin{minipage}[t]{.5\linewidth}
\includegraphics[width=\linewidth]{./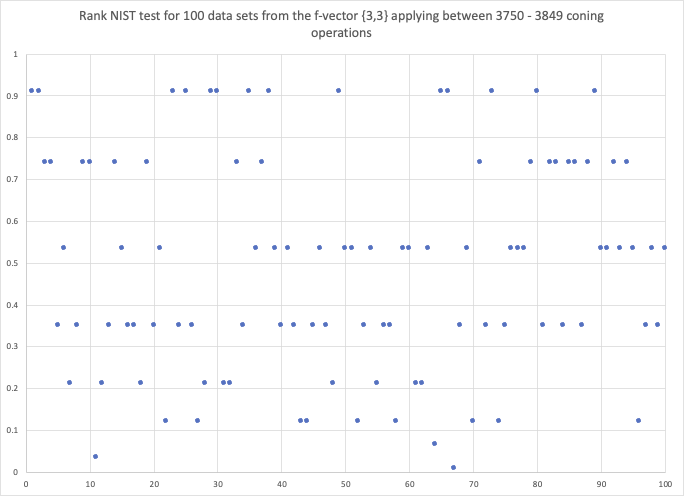}
\end{minipage}\hfill
\begin{minipage}[b]{.5\linewidth}
\includegraphics[width=\linewidth]{./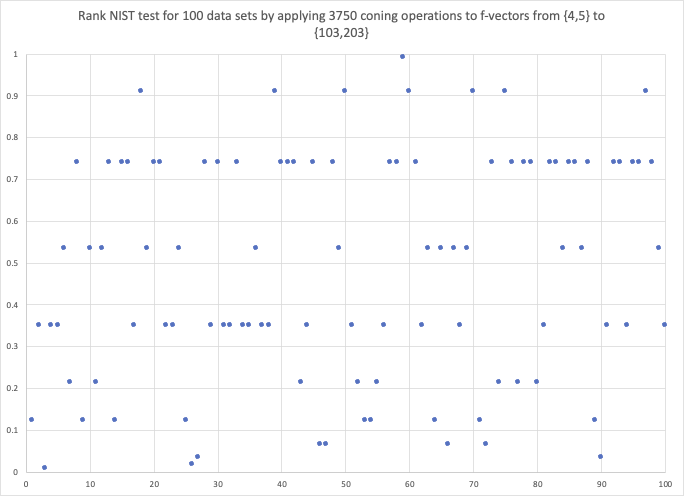}
\end{minipage}
\end{figure}


\end{document}